\newcommand{\Z}{\mathbb Z}
\newcommand{\R}{\mathbb R}
\newcommand{\C}{\mathbb C}
\newcommand{\mc}{\mathcal}
\renewcommand{\phi}{\varphi}
\newcommand{\n}{\nabla}
\newcommand{\pd}{\partial}
\newcommand{\tr}{{\rm tr}}
\renewcommand{\leq}{\leqslant}
\renewcommand{\geq}{\geqslant}
\newcommand{\mf}{\mathfrak}
\newcommand{\mb}{\mathbf}
\newcommand{\Sym}{\mathrm{Sym}\,}
\newcommand{\la}{\langle}
\newcommand{\ra}{\rangle}
\newcommand{\til}[1]{\widetilde{#1}}
\newcommand{\Ric}{\mathrm{Ric}}
\newcommand{\Rm}{\mathrm{Rm}}
\renewcommand{\bar}[1]{\mkern 0.5mu\overline{\mkern-0.5mu#1\mkern-0.5mu}\mkern 0.5mu}
\theoremstyle{plain}
\newtheorem{theorem}{Theorem}[section]
\newtheorem*{theorem*}{Theorem}
\newtheorem{conjecture}[theorem]{Conjecture}
\newtheorem{lemma}[theorem]{Lemma}
\newtheorem{corollary}[theorem]{Corollary}
\newtheorem{proposition}[theorem]{Proposition}
\theoremstyle{definition}
\newtheorem{remark}[theorem]{Remark}
\newtheorem{ex}[theorem]{Example}
\newtheorem{definition}[theorem]{Definition}
\newtheorem{example}[theorem]{Example}
\providecommand{\customgenericname}{}
\newcommand{\newcustomtheorem}[2]{%
	\newenvironment{#1}[1]
	{%
		\renewcommand\customgenericname{#2}%
		\renewcommand\theinnercustomgeneric{##1}%
		\innercustomgeneric
	}
	{\endinnercustomgeneric}
}
\newcommand{\T}{\bm T}
\newcommand{\D}{\mathbb D}
\renewcommand{\i}{\mathrm{i}}
\begin{document}
	
	\title{On geometry of steady toric K\"ahler-Ricci solitons}
	\author{Yury Ustinovskiy}
	\email{yuraust@gmail.com}
	\address{Lehigh University, Department of Mathematics\\
	Chandler-Ullmann Hall\\
	17 Memorial Drive East\\
	Bethlehem, PA 18015}
	\date{}
	\begin{abstract}
		In this paper we study the gradient steady K\"ahler-Ricci soliton metrics on non-compact toric manifolds. We show that the orbit space of the free locus of such a manifold carries a natural Hessian structure with a nonnegative Bakry-\'Emery tensor. We generalize Calabi's classical rigidity result and use this to prove that any complete
		$\T^n$-invariant gradient steady K\"ahler-Ricci soliton with a free torus action must be a flat $(\C^*)^n$.
	\end{abstract}
	\maketitle

	\section{Introduction}\label{s:intro}

	A Riemannian manifold $(M,g)$ is a \textit{Ricci soliton}, if its Ricci curvature satisfies the equation
	\[
	\Ric+\frac{1}{2}\mc L_V g=\lambda g
	\]
	for some vector field $V\in C^\infty(M,TM)$ and a constant $\lambda\in \R$. Ricci solitons are the stationary solutions to the Ricci flow modulo the action of the diffeomorphism group and scaling. They represent singularity models for the flow and realize a class of distinguished metrics generalizing the notion of the Einstein metrics.
	A Ricci soliton $(M,g)$ is \textit{expanding, steady or shrinking} depending on whether $\lambda<0$, $\lambda=0$ or $\lambda>0$. A particularly important class of \textit{gradient} Ricci solitons occurs when $V=\n_g f$ is a gradient vector field. In this case, the soliton equation takes form
	\[
	\Ric+\n^2_gf=\lambda g.
	\]
	Since the early works of Hamilton on the Ricci flow~\cite{ha-82,ha-86,ha-93}, classification and construction of the gradient Ricci solitons has been one of the central problems in geometric analysis, see, e.g.,~\cite{iv-93, pe-wy-09, cao-10,mu-se-13}. In the present paper we will focus on \textit{steady gradient} Ricci solitons. On such solitons $\Delta f- |df|^2=const$, and it follows from the maximum principle that a \textit{compact} steady Ricci soliton is necessarily Einstein. Thus if we are interested in genuine solitons (i.e., with non-Killing soliton vector field $V=\n_g f$) we have to consider non-compact backgrounds. The next best thing is $(M,g)$ being \textit{complete}~--- this set up is particularly important for the study of singularity models for the Ricci flow.

	Recall that a Riemannian manifold $(M,g,J)$ equipped with an integrable complex structure $J\colon TM\to TM$, $J^2=-\mathrm{Id}$ is \textit{K\"ahler} if $\n_gJ=0$, or, equivalently, if the skew-symmetric form $\omega=gJ$ is closed. The Ricci flow is known to preserve the K\"ahler condition, so it is particularly important to study and classify Ricci solitons on K\"ahler backgrounds~--- such structures are referred to as \textit{K\"ahler-Ricci solitons} or KRS for short. In the last decades, the question of the existence of K\"ahler-Einstein and KRS metrics has been related to the fundamental notions of stability in algebraic geometry.

	In this paper we study the classification of the \textit{steady} KRS on complete backgrounds. There exist numerous constructions providing explicit examples of complete steady KRS manifolds (including K\"ahler-Ricci flat). In~\cite{ti-ya-90} Tian and Yau constructed a complete K\"ahler-Ricci flat metric on the complement of a canonical divisor in a Fano manifold. Anderson, Kronheimer and LeBrun~\cite{an-kr-lb-89} constructed a family of complete K\"ahler-Ricci flat manifolds of infinite topological type. More recently Yang~\cite{ya-19} found an ``exotic'' complete K\"ahler-Ricci flat metric on $\C^3$ with a singular tangent cone at infinity. Existence of these examples suggests that a possible classification of complete K\"ahler-Ricci flat metrics even on $\C^n$ is a very difficult problem. The situation with the steady KRS is at least as subtle.
	
	In~\cite{ca-96} Cao constructed a unique complete non-flat $SO(2n)$-invariant steady KRS on $\C^n$ by reducing the soliton equation to an explicit ODE for a function of $|z|^2$. Recently Biquard and Macbeth~\cite{bi-mc-17} 
	used the gluing construction to desingularize Cao's KRS quotient metric on a crepant resolution of $\C^n/\Gamma$. Conlon and Deruelle~\cite{co-de-20} proved a similar statement using the continuity method.
	Sch\"afer~\cite{sc-21} adapted the ideas of~\cite{co-de-20} to construct steady solitons on crepant resolutions of orbifolds $(\C\times X_{\mathrm{CY}})/\Gamma$, where $X_{\mathrm{CY}}$ is a compact Calabi-Yau manifold.
	
	We stress that the known examples of the steady KRS are manifestly non-unique, e.g., the product of Cao's KRS $(\C^{n_1},g_1^{\mathrm{Cao}})\times (\C^{n_2},g_2^{\mathrm{Cao}})$ is never isomorphic to $(\C^{n_1+n_2},g^{\mathrm{Cao}})$. This is in contrast with the expanding and shrinking complete solitons, where the uniqueness is known under mild geometric assumptions, see, e.g., Cifarelli~\cite{ci-20} for the uniqueness of shrinking solitons with toric symmetry, and Conlon, Deruelle, Sun~\cite{co-de-su-19} for several uniqueness results of both shrinking and expanding solitons.
	
	All of the known \textit{uniqueness} results for steady solitons assume an explicitly prescribed asymptotics of the soliton vector field and the metric at infinity, see, e.g.,~\cite{co-de-20}. This situation is largely unsatisfactory, since, it does not answer even the simplest questions:  what are the possible soliton vector fields on $\C^n$? what are the possible asymptotics of a KRS on $\C^n$? does $(\C^*)^n$ admit any non-trivial steady KRS?
	
	To make an initial progress into the classification of steady KRS $(M,g,J)$ without a priori assumptions on the asymptotics of $g$ at infinity, we focus on \textit{toric} K\"ahler manifolds~--- manifolds $(M^{2n},g,J)$ equipped with an effective Hamiltonian action of a half-dimensional torus $\T^n=(S^1)^n$. Toric manifolds have been a particularly fruitful source of examples of K\"ahler and algebraic manifolds with explicitly computable underlying geometric invariants. Some of the early substantial advances in the problem of construction of canonical metrics has been made on \textit{toric} backgrounds, see,.e.g.,~\cite{wa-zh-04, zh-12, do-09}. The main advantage of this setting is that the geometry of a toric K\"ahler manifold $(M^{2n},g,J)$ can be fully captured by a simpler geometric data on a real $n$-dimensional orbit space $M^{2n}/\T^n$, which is essentially an object of convex geometry~\cite{de-88}. This allows to reduce the construction and classification of the canonical metrics on $(M,g,J)$ to the classical and better understood problems of real convex analysis.
	
	The main result of this paper is the following theorem classifying steady toric KRS with a free torus action. Essentially it is a non-existence result concluding that any such soliton must be a flat $(\C^*)^n$.
	
	\begin{customthm}{\ref{th:krs_uniqueness}}
		\it{
		Let $(M,g,J,\omega,\T^n)$ be a complete steady toric K\"ahler-Ricci soliton with the free torus action $\T^n\times M\to M$. Then $(M,g,J,\omega,\T^n)$ is equivariantly isomorphic to a flat algebraic torus $(\C^*)^n$ equipped with the standard complex structure and the action of $U(1)^n\subset (\C^*)^n$.
		}
	\end{customthm}

	Let us discuss the strategy of the proof. The orbit space $M^{2n}/\T^n$ of a free Hamiltonian action on a symplectic manifold $(M^{2n},\omega)$ has a natural \textit{affine structure}, i.e., there is a canonical torsion-free flat connection $D$ on $M^{2n}/\T^n$. A $\T^n$-invariant K\"ahler $(M^{2n},g,J)$ structure compatible with $\omega$ upgrades the affine structure on $N:=M^{2n}/\T^n$ to the \textit{Hessian structure}~-- a metric locally given by the Hessian of a convex function: $D^2u(x)$. With this extra structure, various notions of canonical metrics on $M^{2n}$ translate into scalar PDEs for the function $u(x)$ on $N$, see, e.g., Donaldson \cite{do-09} for the construction of constant scalar curvature metrics, Abreu~\cite{ab-98} for the explicit construction of extremal K\"ahler metrics, Zhu~\cite{zh-12} for the proof of the convergence of the K\"ahler-Ricci flow on toric Fano manifolds. Most importantly for us is that the \textit{steady KRS} equation turns out to be equivalent to the weighted Monge-Amp\`ere equation
	\begin{equation}\label{eq:intro_ma}
	\log\det \frac{\pd^2 u}{\pd x^i\pd x^j}=-v_px^p+\frac{\pd u}{\pd x^q}\xi^q+c
	\end{equation}
	where we denote by $\{x^i\}$ the local $D$-parallel coordinate system on $N$, and $v\in T^*N$, $\xi\in TN$ are fixed $D$-parallel sections. General Monge-Amp\`ere equations $\log\det u_{,ij}=F(x,u,\n u)$ have been central to the geometric analysis and the optimal transport. We refer the reader to the book of Guti{\'e}rrez~\cite{gu-16} and references therein for a detailed exposition. A special \emph{unweighted} case of equation~\eqref{eq:intro_ma} with $v=\xi=0$ was studied in the seminal paper of Calabi~\cite{ca-58}. In this paper Calabi proved the fundamental \textit{third order estimate} by providing an a priori upper bound for $|u_{,ijk}|^2$. The key input of Calabi is that if a convex function $u(x)$ solves the equation $\log\det u_{,ij}=c$ on $N$ then $(N,D^2u)$ has a nonnegative Ricci curvature. For the equation~\eqref{eq:intro_ma} this is not longer true, and Calabi's argument does not go through directly. However, if $u(x)$ solves~\eqref{eq:intro_ma}, then $(N,D^2u)$ has a nonnegative \textit{Bakry-\'Emery} tensor $\Ric_{\phi}:=\Ric+\n^2\phi$ for an appropriate weight function $\phi$.
	
	\begin{customprop}{\ref{prop:ric_positive}}(Kolesnikov,~\cite{ko-14})
		\it{
		Let $(N,D)$ be a simply-connected affine manifold equipped with a development map $\bm{\iota}\colon N\to\R^n$ and a Hessian metric $g=D^2u(x)$, such that function $u(x)$ solves the Monge-Amp\`ere equation
		\[
		\log\det \frac{\pd^2 u}{\pd x^i\pd x^j}=-v_px^p+\frac{\pd u}{\pd x^q}\xi^q+c\quad \xi\in\R^n, v\in (\R^n)^*.
		\]
		Consider the function
		\[
		\phi=\frac{1}{2}\left(u_{,p}\xi^p+v_qx^q\right).
		\]
		Then
		\[
		(\Ric_\phi)_{ij}=\frac{1}{4}g^{pq}g^{kl}u_{,ipk}u_{,jql}.
		\]
		In particular, $\Ric_\phi\geq 0$.
		}
	\end{customprop}
	This proposition is a special and simpler case of a lower bound on the Bakry-\'Emery Ricci tensor in~\cite{ko-14}, where Kolesnikov considered a more general Monge-Amp\`ere equation. With this crucial observation we manage to reproduce Calabi's estimates and get an upper bound on $|u_{,ijk}|^2_g$. This bound in turn yields the rigidity of \textit{complete} solutions to~\eqref{eq:intro_ma} implying that $u(x)$ must be a quadratic polynomial. In particular we have the following generalization of Calabi's classical result
	
	\begin{customcor}{\ref{cor:calabi_gen}}
		\it{
		Let $N\subset \R^n$ be an open subset and $u\colon N\to  \R$ a strictly convex function solving the Monge-Amp\`ere equation
		\begin{equation}
			\log\det (u_{,ij})=-v_jx^j+u_{,i}\xi^i+c,
		\end{equation}
		where $v\in (\R^n)^*$, $\xi\in\R^n$ and $c$ is a constant. If metric $g=D^2u$ on $N$ is complete, then $N=\R^n$, and $u(x)$ is a quadratic polynomial.
		}
	\end{customcor}

	The rest of the paper is organized as follows. In Section~\ref{s:bg} we review the necessary geometric background~--- Hessian manifolds, foundations of the K\"ahler toric geometry and the reduction of the steady KRS to the Monge-Amp\`ere equation on the quotient Hessian manifold. Most of the material of this section is standard and the results on toric K\"ahler structures can be found, e.g., in a foundational paper by Guillemin~\cite{gu-94}. In Section~\ref{s:bakry-emery} we make a seemingly unrelated detour into the geometry of smooth metric measure spaces with a non-negative Bakry-\'Emery Ricci tensor $\Ric_\phi$. Based on the results of Wei and Wylie~\cite{we-wy-09} we reprove the a priori estimates for the solution of the differential inequality $\Delta_\phi\sigma\geq \sigma^2$. Next, in Section~\ref{s:ma} we relate the general results on the properties of the non-negative Bakry-\'Emery Ricci tensor to the geometry of the Hessian manifolds solving the (weighted) Monge-Amp\`ere equations. In this section building on the work of Kolesnikov~\cite{ko-14} we establish the key fact that the Hessian manifolds underlying a steady toric KRS have a non-negative Bakry-\'Emery tensor for an appropriate weight function. With this observation we adapt the classical Calabi's third order estimate~\cite{ca-58}. Finally in Section~\ref{s:main} we apply the third order estimate to prove the main result, and formulate an open question regarding the properness of the moment map.
	
	\section*{Acknowledgments}\label{s:ack}
	
	This paper is a result of many fruitful discussions with Vestislav Apostolov and Jeffrey Streets. We are indebted to them for their interest and advice. We are grateful to thank Huai-Dong Cao for valuable suggestions at the earlier stages of this project. We also would like to thank Charles Cifarelli for pointing out a mistake in an earlier version of this paper.
	
	\section{Geometric Background}\label{s:bg}

	In this section we collect the necessary background on Hessian manifolds, K\"ahler toric manifolds and K\"ahler-Ricci solitons. We explain that the orbit space of a K\"ahler toric manifold inherits a Hessian structure, and give a scalar reduction of a steady toric K\"ahler-Ricci soliton to a weighted Monge-Amp\`ere equation on the orbit space.

	\subsection{Hessian Manifolds}\label{ss:hessian}

	\begin{definition}[Affine manifolds]
		\textit{An affine manifold} is a smooth manifold $N$
		equipped with a flat torsion free connection $D$ in the tangent bundle $TN$. The connection $D$ provides a distinguished coordinate system in a neighbourhood of any point, such that the transition functions between two such coordinate systems are affine.

		We can equivalently define an affine manifold as a smooth manifold $N$ equipped with an atlas $\phi_\alpha\colon U_\alpha\to \R^n$ such that the transition functions $\phi_\alpha\circ\phi_\beta^{-1}$ are affine. Atlas $(U_\alpha,\phi_\alpha)$ is called an \textit{affine structure} and uniquely determines a flat torsion free connection $D$.

		We denote by $\mc A$ the sheaf of \textit{affine functions} with the space of sections
		\[
		\Gamma(U,\mc A)=\{f\colon U\to \R\ |\ D^2f=0\}.
		\]
	\end{definition}

	\begin{example}\label{ex:affine_mfds}
		The simplest example of an affine manifold is $\R^n$ with the obvious flat connection. If a group $G\subset \mathrm{Aff}(\R^n)$ of affine transformations of $\R^n$ acts freely and properly-discontinuously on an open subset $U\subset \R^n$ then $U/G$ is an affine manifold. If $(M,D)$ is an affine manifold, and ${\bm\iota}\colon N\to M$ is an open immersion, then $M$ naturally inherits an affine structure $(N,\bm\iota^*D)$.
	\end{example}

	\begin{definition}[Development map]\label{rmk:affine_mfds}
		Let $(N,D)$ be an affine manifold, $\dim N=n$. An open map $\bm{\iota}\colon N\to \R^n$ is a \textit{development map} for $(N,D)$ if $D={\bm\iota}^*D_{0}$, where $D_{0}$ is the standard affine structure on $\R^n$. Equivalently, the 1-forms $\{{\bm\iota}^*dx^i\}$ provide a $D$-parallel trivialization of $T^*N$,
        where $\{x^1,\dots,x^n\}$ is a coordinate system on~$\R^n$.
	\end{definition}

    \begin{lemma}\label{lm:development_map}
        Any simply connected affine manifold $(N,D)$ admits a development map.
    \end{lemma}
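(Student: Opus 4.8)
The plan is to globalize the local affine charts from the definition into a single map, by first producing a \emph{global} $D$-parallel coframe on $N$ and then integrating it to coordinate functions. Simple connectivity will enter at exactly two points: it trivializes the holonomy of $D$, so that a coframe chosen at one point propagates over all of $N$, and it forces the resulting closed one-forms to admit global primitives.

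First I would fix a basepoint $p_0\in N$ and a basis $\{\theta^1_0,\dots,\theta^n_0\}$ of $T^*_{p_0}N$. Because $D$ is flat, $D$-parallel transport along a path depends only on its homotopy class rel endpoints; as $N$ is simply connected, any two paths with the same endpoints are homotopic, so parallel transport is path independent. Transporting each $\theta^i_0$ therefore yields a well-defined global $D$-parallel one-form $\theta^i$ on $N$, and these remain pointwise linearly independent, giving a global $D$-parallel coframe $\{\theta^i\}$.

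Next I would integrate this coframe. A short computation using that $D$ is torsion free gives $d\theta(X,Y)=(D_X\theta)(Y)-(D_Y\theta)(X)$ for every one-form $\theta$, so every $D$-parallel one-form is closed; in particular each $\theta^i$ is closed. Since $H^1_{\mathrm{dR}}(N)=0$ for simply connected $N$, we may write $\theta^i=dx^i$ for global functions $x^i\colon N\to\R$. I then set $\bm{\iota}:=(x^1,\dots,x^n)\colon N\to\R^n$, so that $\bm{\iota}^*dx^i=\theta^i$ is exactly the desired $D$-parallel trivialization of $T^*N$.

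Finally I would verify the two defining properties. As the pullbacks $\bm{\iota}^*dx^i=\theta^i$ are pointwise linearly independent, the differential of $\bm{\iota}$ is an isomorphism at each point, so $\bm{\iota}$ is a local diffeomorphism and in particular an open map. For the connections, $\{\theta^i\}$ is parallel for $D$ by construction and parallel for $\bm{\iota}^*D_0$ because each $dx^i$ is $D_0$-parallel and pullbacks preserve parallel sections; since a connection is uniquely determined by a global parallel coframe, $D=\bm{\iota}^*D_0$. I do not anticipate a serious obstacle here — the only real care is in cleanly separating the two roles of simple connectivity. An equivalent and more classical route is the developing-map construction by analytic continuation of a single affine chart along paths, whose consistency is again precisely the triviality of the affine monodromy over a simply connected base.
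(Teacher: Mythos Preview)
Your proof is correct and follows essentially the same approach as the paper: use simple connectivity to trivialize the holonomy of the flat connection $D$, obtain a global $D$-parallel coframe, note that torsion-freeness makes these one-forms closed and hence exact, and assemble the primitives into the development map. Your version is slightly more explicit in verifying that $\bm\iota$ is open and that $D=\bm\iota^*D_0$, but the argument is the same.
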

    \begin{proof}
        Since $\pi_1(N)=1$ and the connection $D$ is flat, its holonomy is trivial: $\mathrm{Hol}(D)=\{\mathrm{id}\}$. Therefore one can construct a $D$-parallel trivialization $\{\eta^1,\dots,\eta^n\}$ of $T^*N$. Since $D$ is torsion free, and $D\eta^i=0$, we conclude that 1-forms $\eta^i$ are closed, and thus exact: $\eta^i=dx^i$ for $x^i\in C^\infty(N,\R)$. Then ${\bm\iota}=(x^1,\dots,x^n)$ is the required development map.
    \end{proof}

	\begin{definition}[Hessian manifolds]
		Let $(N,D)$ be an affine manifold. A function $u_\alpha$ on an open set $U_\alpha$ is called \textit{strictly convex} if the symmetric 2-tensor
		\[
		D^2u_\alpha\in C^\infty(U_\alpha, \mathrm{Sym^2}(T^*N))
		\]
		is strictly positive.

		A Riemannian metric $g$ on an affine manifold is called a \textit{Hessian metric} if there exists an open cover $N=\bigcup U_\alpha$ and a collection of strictly convex functions $u_\alpha\colon U_\alpha\to\R$ such that
		\begin{equation}\label{eq:hessian_def}
		g\big|_{U_\alpha}=D^2u_\alpha.
		\end{equation}
		An affine manifold $(N,D)$ with a Hessian metric $g=D^2u_\alpha$ is called a \textit{Hessian manifold}.
	\end{definition}

	A Hessian metric $g$ on an affine manifold $(N,D)$ defines an affine $\R$-bundle $L\to N$. Namely, if $g$ is determined by local convex functions $u_\alpha\colon U_\alpha\to\R$ so that~\eqref{eq:hessian_def} holds, then the differences $u_\alpha-u_\beta$ are affine functions on $U_{\alpha}\cap U_\beta$. Thus collection $\{u_\alpha-u_\beta\}$ defines a \v{C}ech 1-cocycle and the corresponding cohomology class
	\[
	[\{u_\alpha-u_\beta\}]\in H^1(N,\mc A),
	\]
	where the group $H^1(N,\mc A)$ classifies \textit{affine} $\R$-bundles over $N$. Given two sections of an affine $\R$-bundle $s_1,s_2\in\Gamma(N,L)$, their difference is a well-defined function
	\[
	s_1-s_2\in C^\infty(N,\R).
	\]

	It is easy to see that on a simply-connected affine manifold $(N,D)$ the group $H^1(N,\mc A)$ is trivial, in particular a Hessian metric $g$ is given by the Hessian of a globally defined function $u\in C^\infty(N,\R)$ (see, e.g., discussion in~\cite[\S 2]{hu-on-19}):
	\begin{lemma}\label{lm:hessian_simply_connected}
		Let $(N,D)$ be a simply-connected affine manifold. Then $H^1(M,\mc A)$ is trivial and any Hessian metric $g$ is the Hessian of a globally defined convex function $u\in C^\infty(N,\R)$:
		\[
		g=D^2u.
		\]
	\end{lemma}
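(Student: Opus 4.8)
The plan is to prove the two assertions in turn: first that the sheaf cohomology group $H^1(N,\mc A)$ vanishes, and then to deduce the existence of a global convex potential $u$ from this vanishing, applied to the Čech class $[\{u_\alpha-u_\beta\}]$ recalled just before the lemma. For the vanishing I would exploit a short exact sequence of sheaves on $N$. Sending an affine function $f$ to its differential $df$ defines a sheaf morphism from $\mc A$ to the sheaf $\mathcal F$ of $D$-parallel $1$-forms, since $D^2f=0$ says precisely that $df$ is $D$-parallel. This morphism is surjective as a morphism of sheaves: a $D$-parallel $1$-form $\eta$ is closed (because $D$ is torsion free, $d\eta(X,Y)=(D_X\eta)(Y)-(D_Y\eta)(X)=0$), hence locally exact, $\eta=df$, and then $D^2f=D\eta=0$. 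Its kernel is the constant sheaf $\underline\R$. Thus one obtains
\[
0\to\underline\R\to\mc A\to\mathcal F\to 0.
\]

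The key point is that $\mathcal F$ is a local system: its sections are the parallel cotangent fields, so on a connected set they are determined by their value at one point via parallel transport, and the monodromy of $\mathcal F$ is exactly the holonomy representation of $D$ on $T^*N$. Since $N$ is simply connected this holonomy is trivial (as in the proof of Lemma~\ref{lm:development_map}), so $\mathcal F\cong\underline{\R^n}$. The long exact cohomology sequence then reads
\[
H^1(N,\underline\R)\to H^1(N,\mc A)\to H^1(N,\mathcal F),
\]
and both outer terms vanish: $H^1(N,\underline\R)\cong H^1(N;\R)=0$ because $N$ is simply connected, and $H^1(N,\mathcal F)\cong H^1(N;\R)^{\oplus n}=0$ for the same reason. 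Hence $H^1(N,\mc A)=0$.

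Finally I would feed this back into the Čech picture. Given the Hessian metric with local potentials $u_\alpha$ on a cover $\{U_\alpha\}$, the differences $u_\alpha-u_\beta$ are affine on the overlaps and define the class $[\{u_\alpha-u_\beta\}]\in H^1(N,\mc A)$. Vanishing of this group makes the cocycle a coboundary: there exist affine functions $a_\alpha$ on $U_\alpha$ with $u_\alpha-u_\beta=a_\alpha-a_\beta$. Then the functions $u_\alpha-a_\alpha$ agree on overlaps and glue to a global $u\in C^\infty(N,\R)$; since $D^2a_\alpha=0$ we get $D^2u=D^2u_\alpha=g$ on each $U_\alpha$, so $g=D^2u$ globally, and strict convexity of $u$ is immediate from $D^2u=g>0$.

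The one step deserving care, and the main obstacle, is the identification of $\mathcal F$ with a trivial local system: this is where simple connectivity enters, converting flatness of $D$ into global triviality. Alternatively, one could bypass sheaf cohomology entirely by fixing a development map $\bm{\iota}$ (Lemma~\ref{lm:development_map}), writing $g_{ij}=g(\pd_i,\pd_j)$ in the resulting global $D$-parallel frame, and integrating the symmetric system $\pd_i\pd_ju=g_{ij}$ twice using the vanishing of $H^1_{dR}(N)$; the integrability conditions $\pd_kg_{ij}=\pd_ig_{kj}$ hold because $g_{ij}$ is locally a Hessian, so each of the two integrations produces a globally defined primitive.
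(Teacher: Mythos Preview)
Your proof is correct, and the second half (splitting the \v{C}ech cocycle to glue a global potential) matches the paper's argument verbatim. The difference is in how you establish $H^1(N,\mc A)=0$. The paper argues in one line that trivial holonomy of $D$ allows any locally defined affine function to be extended uniquely over all of $N$; this amounts to saying that $\mc A$ itself is a constant sheaf of rank $n+1$ on a simply connected space, whence $H^1$ vanishes. You instead resolve $\mc A$ by the short exact sequence $0\to\underline\R\to\mc A\to\mathcal F\to 0$ and kill the outer $H^1$'s separately. Your route is slightly longer but more transparent about exactly where simple connectivity is used (trivializing the local system $\mathcal F$ and killing $H^1(N;\R)$), and it would adapt more readily if one wanted to track what survives when $\pi_1(N)\neq 1$. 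The paper's route is quicker but leaves the passage from ``local sections extend'' to ``\v{C}ech $1$-cocycles bound'' implicit. Your closing alternative via the development map and two successive $H^1_{dR}$-integrations is also valid and is essentially the most elementary version of the same idea.
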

	\begin{proof}
		Since $N$ is simply connected, the holonomy of the flat connection $D$ is trivial. Therefore, any locally defined affine section $s_0\in\Gamma(U,\mc A)$ can be uniquely extended to a global section $s\in\Gamma(M,\mc A)$. This proves that any \v{C}ech 1-cocycle in $C^1(N,\mc A)$ is a boundary so that $H^1(N,\mc A)$ is trivial.

		Let $\{U_\alpha\}$ be an open cover of $N$ and denote by $\{u_\alpha\}$ the locally convex functions defining the Hessian metric $g$. Then, since the cocycle $\{u_\alpha-u_\beta\}\in C^1(N,\mc A)$ is trivial, we can find affine functions $\{a_\alpha\}$ such that $a_\alpha-a_\beta=u_\alpha-u_\beta$. Therefore $u:=u_\alpha-a_\alpha$ is a globally defined convex function such that $D^2u=g$, as required.
	\end{proof}

	\subsection{K\"ahler Toric manifolds}\label{ss:toric_kahler}
	\subsubsection*{Symplectic data}
	Let $\T^k\simeq (S^1)^k$ be a real $k$-dimensional torus acting on a smooth manifold $M$, $\dim_\R M=2n$. In what follows we do not assume that $M$ is compact.	Denote by $\mf t\simeq \R^k$ the Lie algebra of $\T^k$. For a vector $v\in \mf t$ we set $X_v\in C^\infty(M,TM)$ to be the fundamental vector field induced by the infinitesimal action generated by $v$.

	Assume that $M$ is equipped with a symplectic form $\omega\in C^\infty(M,\Lambda^2(M))$. Recall that the action $\T^k\times M\to M$ is \textit{Hamiltonian} if there exists a moment map
	\begin{equation}\label{eq:def-moment-map}
	\bm{\mu}\colon M\to \mf t^*
	\end{equation}
	such that for every $v\in \mf t$ we have
	\begin{equation}\label{eq:def-moment-map-sign}
	\omega(X_v,\cdot)=-d\la \bm\mu,v\ra.
	\end{equation}
	It follows from the Cartan's magic formula that a Hamiltonian action of $\T^k$ preserves $\omega$, so $\T^k$ acts by symplectomorphisms.

	Let us collect some elementary properties of the moment map on a symplectic manifold.

	\begin{proposition}\label{prop:moment_properties}
		Let $(M,\omega)$ be a symplectic manifold with a Hamiltonian action of a torus $\T^k$ and the moment map $\bm{\mu}\colon M\to\mf t^*$. Then
		\begin{enumerate}
			\item $\bm{\mu}$ is uniquely determined up to an addition of a constant element in $\mf t^*$.
			\item $\bm{\mu}$ is invariant under the action of $\T^k$, or equivalently
			\[
			\omega(X_v,X_w)=0
			\]
			for any $v,w\in \mf t$.
		\end{enumerate}
	\end{proposition}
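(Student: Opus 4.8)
The plan is to handle the two parts separately: the first is an immediate consequence of the defining relation~\eqref{eq:def-moment-map-sign}, while the second rests on the abelian nature and compactness of $\T^k$. Throughout I assume $M$ is connected, as is implicit in the toric setting. For part~(1), if $\bm\mu$ and $\bm\mu'$ both satisfy~\eqref{eq:def-moment-map-sign}, then for every $v\in\mf t$ we have $d\la\bm\mu-\bm\mu',v\ra=0$, so $\la\bm\mu-\bm\mu',v\ra$ is locally constant and hence constant on the connected manifold $M$. As this holds for all $v$ and the pairing is linear in $v$, the difference $\bm\mu-\bm\mu'$ is a fixed element of $\mf t^*$; conversely, shifting $\bm\mu$ by a constant preserves~\eqref{eq:def-moment-map-sign} since $d$ annihilates constants.

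For part~(2), I would first record the identity
\[
X_v\la\bm\mu,w\ra=d\la\bm\mu,w\ra(X_v)=-\iota_{X_v}\iota_{X_w}\omega=\omega(X_v,X_w),
\]
where I have used $\iota_{X_w}\omega=-d\la\bm\mu,w\ra$. This already shows that the invariance of $\bm\mu$ under $\T^k$, namely $X_v\la\bm\mu,w\ra=0$ for all $v,w\in\mf t$, is equivalent to the vanishing $\omega(X_v,X_w)=0$. To establish the latter I would invoke two facts: the torus is abelian, so its fundamental vector fields commute, $[X_v,X_w]=0$; and a Hamiltonian action preserves $\omega$, so $\mc L_{X_v}\omega=0$. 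Combining these with Cartan's identity $\mc L_{X_v}\iota_{X_w}=\iota_{X_w}\mc L_{X_v}+\iota_{[X_v,X_w]}$ yields
\[
d\bigl(\omega(X_v,X_w)\bigr)=d\bigl(X_v\la\bm\mu,w\ra\bigr)=-\mc L_{X_v}\iota_{X_w}\omega=-\iota_{[X_v,X_w]}\omega=0,
\]
so $\omega(X_v,X_w)$ equals a constant $c_{v,w}$ on $M$.

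The main obstacle is showing that this constant vanishes: in contrast to the compact case, one cannot simply evaluate at a fixed point, which need not exist. Here I would exploit the compactness of the group $\T^k$ itself. Along the flow of $X_v$, which realizes the one-parameter subgroup $t\mapsto\exp(tv)$, the relation $\tfrac{d}{dt}\la\bm\mu,w\ra=\omega(X_v,X_w)=c_{v,w}$ shows that $\la\bm\mu,w\ra$ grows linearly in $t$. Now if $v$ satisfies $Tv\in\Lambda:=\Ker(\exp\colon\mf t\to\T^k)$ for some $T>0$, then $\exp(Tv)$ is the identity of $\T^k$, so the time-$T$ flow of $X_v$ acts trivially on $M$; consequently $\la\bm\mu,w\ra$ returns to its initial value after time $T$, which forces $c_{v,w}\,T=0$ and hence $c_{v,w}=0$. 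Such $v$ form the dense subset $\Lambda\otimes\Q\subset\mf t$, and since $c_{v,w}$ is linear, hence continuous, in $v$, we conclude $c_{v,w}=0$ for all $v,w$, completing the proof.
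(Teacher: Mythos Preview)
Your proof is correct and shares the same skeleton as the paper's: part~(1) is handled identically, and for part~(2) both arguments first show that $\omega(X_v,X_w)$ is a constant $c_{v,w}$ via the commutation $[X_v,X_w]=0$ and $\mc L_{X_v}\omega=0$.

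The divergence is in how the constant is shown to vanish. The paper observes that the closure of the $X_v$-orbit through any point is a compact submanifold, so the continuous function $\la\bm\mu,w\ra$ restricted to it attains an extremum; at that extremum $i_{X_v}d\la\bm\mu,w\ra=0$, i.e., $\omega(X_v,X_w)=0$. This works for every $v\in\mf t$ at once. You instead restrict first to $v$ with $Tv$ in the kernel lattice, use periodicity of the flow to force $c_{v,w}T=0$, and then pass to general $v$ by density of $\Lambda\otimes\Q$ and linearity of $c_{v,w}$ in $v$. Both arguments ultimately exploit the compactness of $\T^k$; the paper's critical-point trick avoids the two-step density argument, while your version makes the role of the lattice and the periodic orbits more explicit.
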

	\begin{proof}
		Clearly each of the functions $\la\bm\mu,v\ra$ is defined up to an additive constant, so $\bm\mu$ is determined up to an addition of a constant element in $\mf t^*$. Next, since $X_v$ and $X_w$ commute and preserve $\omega$, we know that $\mc L_{X_v}i_{X_w}\omega= d(\omega(X_v,X_w))=0$, so $\omega(X_v,X_w)$ is constant along the orbit of $X_v$. Since the closure of the orbit of $X_v$ is a compact submanifold, function $\la\bm\mu,w\ra$ has a critical point on this submanifold, and at such a point $\omega(X_v,X_w)=i_{X_v}d\la\bm \mu,w\ra=0$, implying that $\omega(X_v,X_w)$ is identically zero.
	\end{proof}

	In what follows we will be mainly concerned with the open dense set of points $M_0\subset M$ where the action $\T^k\times M\to M$ is free:
	\[
	M_0:=\{x\in M\ |\ \mathrm{Stab}_{\T^k}(x)=\{\mathrm{id}\}  \}
	\]
	Since the compact group $\T^k$ acts freely on $M_0$, the quotient space
	\[
	N_0=M_0/\T^k
	\]
	is naturally a smooth manifold. Denote by
	\begin{equation}\label{eq:pi_def}
	\bm{\pi}\colon M_0\to N_0
	\end{equation}
	the natural projection on the orbit space. Since $\bm{\mu}$ is constant on the orbits of $\T^k$, the moment map descends to a map
	\[
	\til{\bm\mu}\colon N_0\to \mf t^* \mbox{ such that } \bm\mu=\til{\bm{\mu}}\circ\bm{\pi}.
	\]
	Furthermore, since $\omega$ is nondegenerate and the action of $\T^k$ on $N_0$ is free, the differential $d\pmb\mu\in C^\infty(M_0,\mathrm{Hom}(TM_0,\mf t^*))$ has the maximal rank $k$.

	In this paper we will deal with a very special kind of Hamiltonian torus action~--- action of a \textit{half-dimensional} torus.

	\begin{definition}[Symplectic toric manifold]
		A \textit{symplectic toric manifold} is a connected symplectic manifold $(M,\omega)$ of dimension $\dim_\R M=2n$ with an effective Hamiltonian action of a torus $\T^n$.
	\end{definition}

	On the free locus $M_0\subset M $ of a symplectic toric manifold $(M,\omega,\T^n)$ the differential of the moment map $\pmb\mu\big|_{M_0}$ vanishes on the tangent space to the $\T^n$-orbits and has the maximal rank $n$. In particular
	\[
	\til{\bm\mu}\colon N_0\to \mf t^*
	\]
	is an open immersion.

	\begin{proposition}\label{prop:affine}
		Let $(M_0,\omega,\T^n)$ be a symplectic toric manifold with the free torus action $\T^n\times M_0\to M_0$. Then the orbit space $N_0=M_0/T^n$ inherits a unique affine structure $(N_0,D)$ such that the functions $\la \til{\bm{\mu}},v\ra, v\in\mf t$ are affine, and $\til{\bm\mu}\colon N_0\to \mf t^*$ is the development map.
	\end{proposition}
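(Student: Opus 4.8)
The plan is to transport the standard flat affine structure on $\mf t^*\cong\R^n$ to $N_0$ through the descended moment map. Recall that, as observed immediately above the statement, $\til{\bm\mu}\colon N_0\to\mf t^*$ is an open immersion between $n$-dimensional manifolds, hence a local diffeomorphism onto an open subset of $\mf t^*$. Fixing a basis $e_1,\dots,e_n$ of $\mf t$ and the dual linear coordinates identifies $\mf t^*$ with $\R^n$; let $D_0$ denote the resulting standard affine structure. I would then simply define
\[
D:=\til{\bm\mu}^*D_0.
\]
By Example~\ref{ex:affine_mfds}, the pullback of an affine structure under an open immersion is again a flat torsion-free connection, so $(N_0,D)$ is an affine manifold. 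The local pullbacks of $D_0$ glue to a globally defined connection precisely because $\til{\bm\mu}$ is a local diffeomorphism and $D_0$ is intrinsic to $\R^n$, independent of which local inverse of $\til{\bm\mu}$ is used.

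Granting this, the two asserted properties fall out formally. First, since $D=\til{\bm\mu}^*D_0$ and $\til{\bm\mu}$ is open, Definition~\ref{rmk:affine_mfds} says \emph{verbatim} that $\til{\bm\mu}$ is a development map for $(N_0,D)$. Second, for $v\in\mf t$ the pairing $\langle\,\cdot\,,v\rangle\colon\mf t^*\to\R$ is linear, hence $D_0^2\langle\,\cdot\,,v\rangle=0$; pulling back along $\til{\bm\mu}$ and using $D=\til{\bm\mu}^*D_0$ gives
\[
D^2\langle\til{\bm\mu},v\rangle=\til{\bm\mu}^*\bigl(D_0^2\langle\,\cdot\,,v\rangle\bigr)=0,
\]
so each $\langle\til{\bm\mu},v\rangle$ is an affine function on $(N_0,D)$.

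For uniqueness, write $x^i:=\langle\til{\bm\mu},e_i\rangle$. Because $\til{\bm\mu}$ is an immersion, the differentials $\{dx^1,\dots,dx^n\}$ are pointwise linearly independent and therefore form a global coframe on $N_0$. Suppose $D'$ is any affine structure on $N_0$ for which all the functions $\langle\til{\bm\mu},v\rangle$ are affine; applying this to $v=e_i$ forces $D'dx^i=0$ for every $i$, i.e.\ $\{dx^i\}$ is a $D'$-parallel coframe. But a connection is completely determined by the requirement that a given coframe be parallel: for any $1$-form $\alpha=a_i\,dx^i$ one computes $D'\alpha=(da_i)\,dx^i$, and dually $D'$ is pinned down on $TN_0$. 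The identical formula holds for $D$, whence $D'=D$.

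The substance of the statement really lies in the two facts already secured in the text — that $\bm\mu$ descends to $N_0$ and that the descended map is an open immersion — after which the argument is purely formal. The only point deserving genuine care, and the one I would flag as the main (if modest) obstacle, is that $\til{\bm\mu}$ need not be globally injective, so one cannot simply promote it to a single global chart; both the pullback construction and the coframe argument are designed to use only its local-diffeomorphism property and thereby sidestep this issue.
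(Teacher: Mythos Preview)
Your proof is correct and follows essentially the same approach as the paper: pull back the standard affine structure on $\mf t^*$ along the open immersion $\til{\bm\mu}$, observe that the pairings $\langle\til{\bm\mu},v\rangle$ are then affine by construction, and deduce uniqueness from the fact that these functions form a local coordinate system. The paper's proof is a two-sentence version of exactly this; your write-up simply unpacks the uniqueness step (via the parallel-coframe argument) and the development-map verification more explicitly.
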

	\begin{proof}
		Since the map $\til{\bm\mu}\colon N_0\to \mf t^*$ is an open immersion, the natural affine structure on $\mf t^*\simeq \R^n$ can be pulled back to $N_0$. By definition, $\la \til{\bm{\mu}},v\ra$ are affine functions for this structure, and being a local coordinate system, these functions uniquely determine the affine structure on $N_0$.
	\end{proof}

	We now describe a local coordinate system on $M_0$. For the notational convenience, let us introduce a basis $e^1,\dots,e^n$ the weight lattice of $\mf t$, denote by $e_1,\dots,e_n\in\mf t^*$ the dual basis. For a vector $v\in\mf t$ we will denote by $\{v_i\}$ its coordinates relative to $\{e^i\}$, and respectively for a covector $\xi\in\mf t^*$, we denote by $\{\xi^i\}$ its components relative to $\{e_i\}$. Let $x^i:=\la\bm\mu,e^i\ra\in C^\infty(M,\R)$ be the components of the moment map. Functions $\{x^i\}_{i=1}^n$ can be augmented by locally defined \textit{angular} functions $\theta_i\colon M\to \R/2\pi \Z$, $i=1,\dots n$ to provide a local coordinate system on $M_0$.

	\begin{lemma}\label{lm:omega_coordinates}
		On a symplectic toric manifold $(M_0,\omega,\T^n)$ with a free Hamiltonian action of $\T^n$ the moment map $\bm{\mu}=(x^1,\dots x^n)\colon M_0\to \mf t^*\simeq \R^n$ can be augmented by local $\T^n$-invariant functions $\{\theta_1,\dots,\theta_n\}$ such that $X_{e^j}\cdot\theta_i=\delta_i^j$,
		providing a Darboux coordinate system for $\omega$:
		\begin{equation}\label{eq:omega_coordinates}
		\omega=\sum_{i=1}^n dx^i\wedge d\theta_i.
		\end{equation}
	\end{lemma}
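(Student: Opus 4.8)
The plan is to build the angular functions by action-angle coordinates: first trivialize the principal bundle $\bm\pi\colon M_0\to N_0$ over a small contractible open set, then correct the naive torus coordinates by a function of $x$ alone so as to force $\omega$ into Darboux form. Everything is local and purely differential-topological; the only nontrivial structural inputs are the moment map equation~\eqref{eq:def-moment-map-sign} and the isotropy of the orbits from Proposition~\ref{prop:moment_properties}.

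First I would fix a contractible open set $U\subset N_0$ together with a smooth local section $\sigma\colon U\to M_0$ of $\bm\pi$. Since $\T^n$ acts freely, the map $U\times\T^n\to\bm\pi^{-1}(U)$ sending $(y,[t])\mapsto \exp\bigl(\sum_j t_j e^j\bigr)\cdot\sigma(y)$ is a diffeomorphism, where $[t]$ is the class of $t=\sum_j t_j e^j$ in $\T^n=\mf t/2\pi\Lambda$ and $\Lambda$ is the lattice generated by $e^1,\dots,e^n$. Because the $e^j$ form a lattice basis, the flows of the $X_{e^j}$ are $2\pi$-periodic, so the coordinates $\phi_j:=t_j\bmod 2\pi$ are well-defined $\T^n$-angular functions with $X_{e^j}=\pd/\pd\phi_j$, i.e.\ $X_{e^j}\cdot\phi_i=\delta_i^j$, while the $x^i=\la\bm\mu,e^i\ra$ are pulled back from $U$ and hence satisfy $X_{e^j}\cdot x^i=0$. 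Thus $(x^1,\dots,x^n,\phi_1,\dots,\phi_n)$ is a coordinate system on $\bm\pi^{-1}(U)$.

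Next I would read off $\omega$ in these coordinates. Writing $\omega=\tfrac12\sum A_{ij}\,dx^i\wedge dx^j+\sum_{i,j}B_{ij}\,dx^i\wedge d\phi_j+\tfrac12\sum C_{ij}\,d\phi_i\wedge d\phi_j$, the identity $X_{e^j}=\pd/\pd\phi_j$ and Proposition~\ref{prop:moment_properties} give $C_{ij}=\omega(X_{e^i},X_{e^j})=0$, so the pure-angle block vanishes. Contracting the displayed expression with $\pd/\pd\phi_k$ and using the moment map equation $i_{X_{e^k}}\omega=-dx^k$ yields $-\sum_i B_{ik}\,dx^i=-dx^k$, hence $B_{ij}=\delta_{ij}$ and the mixed block is exactly $\sum_i dx^i\wedge d\phi_i$. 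Finally, $\T^n$-invariance of $\omega$ forces the $A_{ij}$ to be independent of the $\phi$'s, so that $\Omega:=\tfrac12\sum A_{ij}(x)\,dx^i\wedge dx^j$ is the pull-back of a $2$-form on $U$, and $d\omega=0$ reduces to $d\Omega=0$; that is, $\omega=\Omega+\sum_i dx^i\wedge d\phi_i$ with $\Omega$ closed and pulled back from the base.

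It then remains to absorb $\Omega$ into the angles. Since $U$ is contractible, the Poincar\'e lemma provides a $1$-form $\beta=\sum_j\beta_j(x)\,dx^j$ on $U$ with $\Omega=d\beta$. Setting $\theta_i:=\phi_i-\beta_i(x)$, a direct index computation gives $\sum_i dx^i\wedge d\theta_i=\sum_i dx^i\wedge d\phi_i+d\beta=\omega$, which is~\eqref{eq:omega_coordinates}; moreover $\theta_i$ is still a genuine angular function modulo $2\pi$, and since $\beta_i$ depends only on $x$ it retains $X_{e^j}\cdot\theta_i=\delta_i^j$ (so that each $d\theta_i$ is $\T^n$-invariant). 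The main point requiring care is the sign and index bookkeeping of the interior-product and Lie-derivative computations in the previous paragraph, needed to pin the mixed block down to the identity and to check that $\Omega$ is genuinely closed; once this is in place the correction $\theta_i=\phi_i-\beta_i$ is forced and there is no further analytic obstacle.
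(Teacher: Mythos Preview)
Your proof is correct. The route differs from the paper's in one structural choice: the paper picks from the outset a local \emph{Lagrangian} submanifold $V^n$ transverse to the orbit $\T^n.x$ and transports it by the torus action, so that the resulting angular coordinates already satisfy $\omega^{-1}(d\theta_i,d\theta_j)=0$ and the Darboux form drops out immediately with no correction step. You instead take an arbitrary section $\sigma$, which leaves the residual closed basic $2$-form $\Omega=\tfrac12\sum A_{ij}(x)\,dx^i\wedge dx^j$, and then absorb it via the Poincar\'e lemma by the shift $\theta_i=\phi_i-\beta_i(x)$. Your argument is a bit longer but more transparent about where each hypothesis enters (orbit isotropy for $C_{ij}=0$, the moment map equation for $B_{ij}=\delta_{ij}$, closedness of $\omega$ for $d\Omega=0$, contractibility of $U$ for the primitive $\beta$); the paper's version is shorter but tacitly uses the existence of a Lagrangian transversal. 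Both are standard action--angle arguments and yield the same coordinates.
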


	\begin{proof}
		For $x\in M_0$ and let $\T^n. x\simeq \T^n$ be the orbit of a point $x$. Let $V^n\subset M_0$ be a local Lagrangian submanifold passing through $x\in M_0$ transversally to $\T^n.x$. Transporting $V^n$ by the action of $\T^n$, we get an equivariant identification between a neighbourhood $U(\T^n.x)$ of $\T^n. x$ and $\T^n\times V^n$:
		\begin{equation}\label{eq:coords_id}
			U(\T^n.x)\simeq \T^n\times V^n
		\end{equation}
		Denote by $\{\theta_i\}_{i=1}^n$ the (multivalued angular) coordinates on $\T^n$ such that $d\theta_i(X_{e^j})=\delta_i^j$ and extend them to functions on $U(\T^n.x)$ via~\eqref{eq:coords_id}. Then in the basis $\{dx^1,\dots,dx^n,d\theta_1,\dots d\theta_n\}$ the 2-vector $\omega^{-1}$ takes the standard form:
		\[
		\omega^{-1}(dx^i,dx^j)=\omega^{-1}(d\theta_i,d\theta_j)=0,\quad \omega^{-1}(d\theta_i,dx^j)=\delta_i^j,
		\]
		or equivalently, $\{dx^1,\dots,dx^n,d\theta_1,\dots d\theta_n\}$ is the Darboux coordinate system for $\omega$ on $U(\T^n.x)$:
		\begin{equation}
			\omega=\sum_{i=1}^n dx^i\wedge d\theta_i=\la d\bm{\mu}\wedge d\bm{\theta}\ra,
		\end{equation}
		where $d\bm{\theta} \colon T U(\T^n.x)\to \mf t$ is a (locally defined) flat principal $\T^n$-connection defined by the identification~\eqref{eq:coords_id}.
	\end{proof}

	\begin{remark}
		From now on, for a symplectic toric manifold $(M_0,\omega,\T^n)$ with a free torus action, we always refer to $\{x^1,\dots,x^n\}$ as the local coordinate system on the orbit space $N_0=M_0/\T^n$. In particular, in a neighborhood of any point $x\in M_0$, forms $\{dx^1,\dots dx^n\}$ generate over $C^\infty(N_0,\R)$ the algebra of the \textit{basic} differential forms of the projection $\bm{\pi}\colon M_0\to N_0$.
	\end{remark}

	\subsubsection*{K\"ahler data}
	Further on we assume that symplectic form $\omega$ is a \textit{K\"ahler form} of a complex K\"ahler manifold $(M,g,J)$, that is there exist an integrable complex structure $J\colon TM\to TM$, $J^2=-\mathrm{Id}$ and a compatible Riemannian metric $g$ such that $\omega(\cdot,\cdot)=g(J\cdot,\cdot)$. We denote by $J^*\colon T^*M\to T^*M$ the dual operator induced by $J$: so that $J^*\alpha (v)=\alpha(Jv)$ for $\alpha\in T^*M$, $v\in TM$.

	\begin{definition}[K\"ahler toric manifold]
	A \textit{K\"ahler toric manifold} is a connected K\"ahler manifold $(M,g,J)$, of dimension $\dim_\R M=2n$ with $\omega=gJ$ and an effective, biholomorphic, Hamiltonian action of an $n$-dimensional torus $\T^n$.
	\end{definition}

	\begin{ex}
		The basic examples of toric K\"ahler manifolds are
		\[
		(\C^n,\omega_{\mathrm{st}}),\quad \omega_{\mathrm{st}}=\i \sum d z^i\wedge d \bar{z}^i
		\]
		and
		\[
		((\C^*)^n,\omega_{\mathrm{log}}),\quad \omega_{\mathrm{log}}=\i \sum \frac{d z^i}{z_i}\wedge \frac{d \bar{z}^i}{\bar{z_i}},
		\]
		where $\{z_i\}$ are complex coordinates on $\C^n$ (respectively on $(\C^*)^n$). Both these examples give rise to \textit{complete} Riemannian metrics on the underlying manifolds. Furthermore, the infinitesimal action of $\mf t_\C=\mf t\otimes \C$ can be integrated to the action of the algebraic torus $(\C^*)^n\supset\T^n$.

		An example of a slightly different flavor is given by a polydisc with the hyperbolic Poincare metric
		\[
		(\D^n, \omega_{\mathrm{hyp}}),\quad \omega_{\mathrm{hyp}}=\i \sum \frac{dz^i\wedge d\bar{z}^i}{(1-|z^i|^2)^2}.
		\]
		While this example still defines a complete metric on the underlying Riemannian manifold, the infinitesimal action of $\mf t_\C=\mf t\otimes \C$ cannot be integrated to the action of $(\C^*)^n$, since the vector fields $JX_v$ for nonzero $v\in \mf t$ are not complete, that is the flows of $JX_v$ are only defined locally.
	\end{ex}
	\begin{remark}
		There are two parallel approaches to the toric geometry in symplectic and complex-algebraic categories. Many K\"ahler manifolds can be considered both as symplectic and as algebraic, making the notion of \emph{toric K\"ahler manifold} ambiguous (see \cite[\S 2.5]{ci-20}). In this paper we take the `symplectic' approach to toric K\"ahler geometry, and do not assume that the underlying complex manifolds $(M,J)$ are algebraic. In particular, we do not assume that the action of $\T^n\simeq (S^1)^n$ can be complexified to the action of $(\C^*)^n$.
	\end{remark}

	Next we describe the data of a toric K\"ahler manifold in terms of a potential function in appropriate local coordinates in a neighbourhood of a free orbit $\T^n.x\subset M_0$ . Let $(M,g,J,\T^n)$ be a toric K\"ahler manifold.
	The local coordinate system of Lemma~\ref{lm:omega_coordinates} in a neighbourhood of a point $x\in M_0$ depends on an identification of a neighbourhood $U(\T^n.x)$ of the orbit $\T^n.x$ with the product $\T^n\times V$. On a complex manifold there is a preferred biholomorphic identification of $U(\T^n.x)$ with the neighbourhood of the \textit{real} torus $\T^n\simeq U(1)^n\subset (\C^*)^n\simeq \T^n\times \R^n$ in the complex torus $\T_\C^n=(\C^*)^n$. Let us denote by $y_i+\i \theta_i$ the holomorphic coordinates on $(\C^*)^n$ such that $\theta_i\in \R/2\pi\Z$ are angular coordinates on $\T^n$.

	The advantage of the corresponding angular coordinates is that the functions $\{\theta_i\}$ are $J$-pluriharmonic, i.e., the 1-forms $J^*d\theta_i:=d\theta_i(J\cdot)=dy_i$ are closed:
	\begin{equation}\label{eq:pluriharmonic_theta}
	d(J^*d\theta_i)=0,\quad \mbox{locally } J^*d\theta_i=dy_i.
	\end{equation}
	Furthermore, forms $dy_i$ are clearly \textit{basic} with respect to the projection $\bm{\pi}\colon M_0\to N_0$. Therefore, on $U(\T^n.x)$ we can write
	\begin{equation}\label{eq:def_G}
	dy_i=J^*d\theta_i=\sum_{j=1}^n\bm G_{ij}dx^j,
	\end{equation}
	where $\bm G_{ij}$ is a matrix-valued function on $N_0$. We can equivalently rewrite~\eqref{eq:def_G} as
	\[
	J^*dx^i=-\sum_{j=1}^n (\bm G^{-1})^{ij}d\theta_j,
	\]
	which gives us an invariant globally-defined expression for $\bm G$ through its inverse:
	\[
	(\bm G^{-1})^{ij}=-\la dx^i,JX_{e^j}\ra=g(X_{e^i},X_{e^j}).
	\]
	In particular, $\bm G$ can be thought of as a positive definite $\mathrm{Sym}^2(\mf t)$-valued function defined globally on $N_0$. Then the condition~\eqref{eq:pluriharmonic_theta} can be restated as an integrability condition for $\bm{G}$ in coordinates $\{x^1,\dots,x^n\}$:
	\[
	\frac{\pd \bm G_{ij}}{\pd x_k}=\frac{\pd \bm G_{ik}}{\pd x_j}.
	\]
	Since $\bm{G}$ is symmetric, it implies that the 1-form $y_idx^i$ is closed, thus there is a local function $u(x)$ such that $y_i=\frac{\pd u}{\pd x_i}$ and
	\[
	\bm G_{ij}=\frac{\pd^2 u}{\pd x_i \pd x_j}
	\]
	Function $u(x)$ is called a \textit{symplectic potential} for the K\"ahler structure on a symplectic toric manifold $(M_0,\omega,\T^n)$. We can collect the above observations in the following lemma.

	\begin{lemma}\label{lm:kahler_coordinates}
		Let $(M_0,g,J,\T^n)$ be a symplectic toric manifold with a free action of $\T^n$, the orbit space $N_0=M_0/\T^n$ and a moment map $\bm{\mu}\colon M_0\to \mf t^*$. Then for any $\omega$-compatible K\"ahler data $(g,J)$ on $M_0$, in a neighbourhood of any point $x\in M_0$, the moment coordinates $\{x^1,\dots,x^n\}$ can be augmented by angular coordinates $\{\theta_1,\dots,\theta_n\}$ such that
		\begin{equation}
		\omega=\sum_{i=1}^n dx^i\wedge d\theta_i.
		\end{equation}
		Furthermore, in a neighbourhood of $\bm \mu(x)\in N_0$ there exists a function $u(x)$ such that
		\begin{equation}\label{eq:kahler_coordinates}
		\begin{split}
		g=\sum_{i,j=1}^n \bm G_{ij}dx^idx^j&+\sum_{i,j=1}^n \bm (\bm G^{-1})^{ij}d\theta_id\theta_j\\
		J^*d\theta_i=\sum_{j=1}^n\bm G_{ij}dx^j,&\qquad J^*dx^i=-\sum_{j=1^n}(\bm G^{-1})^{ij}d\theta_j,
		\end{split}
		\end{equation}
		where $\bm G_{ij}=\frac{\pd^2 u}{\pd x^i\pd x^j}$.

		Conversely, any function $u(x)$ with a positive Hessian on an open subset $U\subset \mf t^*$ defines a local K\"ahler structure on the product $U\times \T^n$, $U\subset \mf t^*$ via the identities~\eqref{eq:kahler_coordinates}.
	\end{lemma}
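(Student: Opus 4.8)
The first assertion, that the moment coordinates $\{x^1,\dots,x^n\}$ extend to angular coordinates $\{\theta_i\}$ putting $\omega$ in Darboux form $\sum_i dx^i\wedge d\theta_i$, is exactly the content of Lemma~\ref{lm:omega_coordinates}, so the work lies in producing the potential $u$ and verifying~\eqref{eq:kahler_coordinates}. For the forward direction I would follow the discussion preceding the lemma: the angular coordinates may be taken $J$-pluriharmonic, so by~\eqref{eq:pluriharmonic_theta} the $1$-forms $dy_i:=J^*d\theta_i$ are closed and basic for $\bm\pi$. Being basic, they expand as $dy_i=\sum_j\bm G_{ij}\,dx^j$ for functions $\bm G_{ij}$ on $N_0$, and the invariant identity $(\bm G^{-1})^{ij}=g(X_{e^i},X_{e^j})$ shows $\bm G$ is symmetric and positive definite. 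Symmetry of $\bm G$ makes $\sum_i y_i\,dx^i$ closed, hence locally $y_i=\pd u/\pd x^i$ for a single function $u(x)$, and then $\bm G_{ij}=\pd^2u/\pd x^i\pd x^j$ with positivity of $\bm G$ saying precisely that $u$ is strictly convex.

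For the metric formula I would read the action of $J$ on the coordinate frame off the relations $J^*d\theta_i=\bm G_{ij}dx^j$ and $J^*dx^i=-(\bm G^{-1})^{ij}d\theta_j$ (which are mutually dual and square to $-1$), obtaining $J\pd_{x^k}=\sum_j\bm G_{jk}\pd_{\theta_j}$ and $J\pd_{\theta_k}=-\sum_i(\bm G^{-1})^{ik}\pd_{x^i}$, where $\pd_{\theta_i}=X_{e^i}$. Substituting into the compatibility relation $g(\cdot,\cdot)=\omega(\cdot,J\cdot)$ and using $\omega(\pd_{x^i},\pd_{\theta_j})=\delta^i_j$ yields directly
\[
g(\pd_{x^i},\pd_{x^j})=\bm G_{ij},\qquad g(\pd_{\theta_i},\pd_{\theta_j})=(\bm G^{-1})^{ij},\qquad g(\pd_{x^i},\pd_{\theta_j})=0,
\]
which is~\eqref{eq:kahler_coordinates}.

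For the converse, given $u$ with $\bm G_{ij}=\pd^2u/\pd x^i\pd x^j>0$ on $U$, I would define $g$, $J^*$ and $\omega=\sum_i dx^i\wedge d\theta_i$ on $U\times\T^n$ by~\eqref{eq:kahler_coordinates}. Positivity of $\bm G$ makes $g$ a Riemannian metric, $\omega$ is manifestly closed, and $\omega=gJ$ is the computation of the previous paragraph read backwards. The one substantive point is the integrability of the almost complex structure $J$, which I would settle by exhibiting holomorphic coordinates: set $y_i=\pd u/\pd x^i$, so $dy_i=\bm G_{ij}dx^j$, and put $z_i=y_i+\i\theta_i$. Then a direct check gives
\[
J^*dz_i=\bm G_{ij}J^*dx^j+\i J^*d\theta_i=-d\theta_i+\i\bm G_{ij}dx^j=\i\,dz_i,
\]
so the $n$ closed, $\C$-linearly independent forms $dz_i$ span the $(1,0)$-cotangent space of $J$; since they are closed their $(0,2)$-parts vanish, so $J$ is integrable (Newlander--Nirenberg), with $\{z_i\}$ a holomorphic chart, and the $\T^n$-action by translation in $\theta$ is biholomorphic and Hamiltonian with moment map $x$.

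The \emph{main obstacle} is this last integrability step in the converse: everything else is bookkeeping on the single identity $g=\omega(\cdot,J\cdot)$ together with the preceding discussion, but the genuinely structural fact is that the symmetry of the Hessian $\bm G$ forces $dy_i=\bm G_{ij}dx^j$ to be closed, which is exactly what promotes the algebraically prescribed $J$ to an integrable complex structure via the holomorphic coordinates $z_i=u_{,i}+\i\theta_i$.
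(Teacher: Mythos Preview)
Your proposal is correct and follows the paper's own argument essentially verbatim for the forward direction: the paper's ``proof'' is precisely the discussion preceding the lemma that you cite, and your derivation of the metric block form from $g=\omega(\cdot,J\cdot)$ just makes explicit a computation the paper leaves to the reader. For the converse the paper offers no argument at all, so your verification of integrability via the closed $(1,0)$-forms $dz_i=u_{,i}+\i\theta_i$ is a genuine addition rather than a deviation; it is the natural check and is exactly in the spirit of the paper's use of the holomorphic coordinates $y_i+\i\theta_i$ elsewhere.
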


	\begin{corollary}\label{cor:hessian}
		Let $\bm{\pi}\colon M_0\to N_0=M_0/\T^n$ be the projection onto the orbit space of a toric K\"ahler manifold $(M_0,g,J,\T^n)$ with a free action of $\T^n$. Then $N_0$ equipped with its natural affine structure of Proposition~\ref{prop:affine} admits a natural Hessian structure $\bm{\pi}_*g$ such that the natural projection
		\[
		(M_0,g)\to (N_0, \bm{\pi}_*g)
		\]
		is a Riemannian submersion. 
	\end{corollary}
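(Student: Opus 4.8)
The plan is to read everything off the local normal form of the metric supplied by Lemma~\ref{lm:kahler_coordinates} and then verify that the resulting local data glue into a global Hessian structure compatible with the affine structure of Proposition~\ref{prop:affine}. First I would fix a free orbit $\T^n.x\subset M_0$ together with the adapted coordinates $\{x^1,\dots,x^n,\theta_1,\dots,\theta_n\}$ of Lemma~\ref{lm:kahler_coordinates}, in which
\[
g=\sum_{i,j}\bm G_{ij}\,dx^i dx^j+\sum_{i,j}(\bm G^{-1})^{ij}\,d\theta_i d\theta_j ,\qquad \bm G_{ij}=\frac{\pd^2 u}{\pd x^i\pd x^j},
\]
with no mixed $dx^i\,d\theta_j$ terms. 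The fibres of $\bm{\pi}$ are the $\T^n$-orbits, whose tangent spaces are spanned by the fundamental fields $X_{e^j}=\pd_{\theta_j}$ (since $dx^i(X_{e^j})=0$ and $d\theta_i(X_{e^j})=\delta_i^j$). The absence of mixed terms then shows that the coordinate fields $\pd_{x^i}$ are $g$-orthogonal to the vertical distribution and hence span the horizontal distribution $H=(\ker d\bm{\pi})^{\perp}$. Because $\bm{\pi}_*\pd_{x^i}=\pd_{x^i}$ on $N_0$ and $g(\pd_{x^i},\pd_{x^j})=\bm G_{ij}$, the restriction $d\bm{\pi}|_H$ is a pointwise linear isometry onto $(TN_0,\sum_{i,j}\bm G_{ij}\,dx^i dx^j)$, which is precisely the assertion that $\bm{\pi}$ is a Riemannian submersion with pushed-forward metric $\bm{\pi}_*g=\sum_{i,j}\bm G_{ij}\,dx^i dx^j$.

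Next I would argue that $\bm{\pi}_*g$ is globally defined and Hessian. Globality is immediate from the invariant expression $(\bm G^{-1})^{ij}=g(X_{e^i},X_{e^j})$, which makes $\bm G$ a well-defined positive $\mathrm{Sym}^2(\mf t)$-valued function on all of $N_0$, independent of the chosen local trivialization. For the Hessian property I would invoke Proposition~\ref{prop:affine}: the $\{x^i\}$ are exactly the $D$-parallel affine coordinates on $(N_0,D)$, so the Christoffel symbols of $D$ vanish in these coordinates and the covariant Hessian coincides with the ordinary one, $(D^2u)_{ij}=\pd^2u/\pd x^i\pd x^j=\bm G_{ij}$. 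Hence on each orbit chart $\bm{\pi}_*g=D^2u$ for the local symplectic potential $u$, which is strictly convex since $\bm G>0$; by the definition in Subsection~\ref{ss:hessian} this exhibits $\bm{\pi}_*g$ as a Hessian metric.

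Finally I would record the patching statement: on an overlap two symplectic potentials $u_\alpha,u_\beta$ have the same Hessian $\bm G$, so $D^2(u_\alpha-u_\beta)=0$ and $u_\alpha-u_\beta\in\Gamma(\mc A)$ is affine, which is exactly the cocycle condition needed for $\{u_\alpha\}$ to define a global Hessian structure. I do not expect a substantive obstacle here; the one point demanding genuine care is the identification of the coordinate Hessian with the covariant Hessian $D^2u$. This is what forces the Hessian structure to be built on the \emph{canonical} affine structure of Proposition~\ref{prop:affine}, and it works only because we use those affine moment coordinates rather than some other $\omega$-Darboux coordinates in Lemma~\ref{lm:kahler_coordinates}; everything else in the argument is a routine unwinding of the normal form.
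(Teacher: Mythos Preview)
Your proposal is correct and follows essentially the same approach as the paper: both arguments read the metric off the local normal form of Lemma~\ref{lm:kahler_coordinates}, identify the horizontal distribution as the span of the $\pd_{x^i}$, and conclude that $\bm{\pi}_*g=\sum_{i,j}\bm G_{ij}\,dx^idx^j=D^2u$ is Hessian in the affine moment coordinates. Your version is simply more explicit about the Riemannian submersion verification, the global invariant description of $\bm G$, and the affine patching of the local potentials~--- details the paper leaves implicit in its very terse proof.
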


	\begin{proof}
		Let $(M_0,g,J,\T^n)$ be a toric K\"ahler manifold with a free action of $\T^n$. By Lemma~\ref{lm:kahler_coordinates}, in a neighbourhood of a point $x\in M_0$, the metric $g$ is given by
		\[
		g=\sum_{i,j=1}^n \bm G_{ij}dx^idx^j+\sum_{i,j=1}^n \bm (\bm G^{-1})^{ij}d\theta_id\theta_j,
		\]
		where
		\[\bm G_{ij}=\frac{\pd^2u}{\pd x^i\pd x^j}\] for a locally defined function $u\colon U\subset N_0\to \R$. Thus, identifying $TN_0$ with the horizontal space $\mathrm{ker}(d\bm{\pi})^{\perp}\subset TM_0$ for the natural submersion $\bm{\pi}\colon M_0\to N_0$, we obtain a Hessian metric on $N_0$:
		$\bm{\pi}_*g:=\sum_{i,j=1}^n \bm G_{ij}dx^idx^j$.
	\end{proof}

	\begin{remark}
		If the orbit space $N_0$ is simply connected, then the angular coordinates $\{\theta_i\}$ and the potential function $u(x)$ can be defined globally. Indeed, by the integrability of the complex structure $J$, the Lagrangian distribution $J\mathrm{span}(X_{e^1},\dots X_{e^n})$ on $M_0$ is integrable. Since $N_0=M_0/T^n$ is simply connected, we can find a global section
		\[
		{\bm\iota}\colon N_0\to M_0
		\]
		tangent to this distribution. Setting $\theta_i\big|_{\bm\iota(N_0)}=0$ we can uniquely extend the $S^1$-valued functions $\theta_i$ to $\T^n$-equivariant functions on $M_0$ such that $\theta_i(X_{e^j})=\delta_i^j$. Now we can invoke Lemma~\ref{lm:hessian_simply_connected} and find a global potential $u\colon N_0\to \R$ for the metric ${\bm\pi}_*g$.
	\end{remark}

	\subsection{Toric K\"ahler-Ricci solitons}\label{ss:toric_krs}

	Recall that a connected Riemannian manifold $(M,g)$ equipped with a function $f\in C^\infty(M,\R)$ is a \textit{gradient Ricci soliton} if
	\begin{equation}\label{eq:RS}
	\Ric+\n_g^2f=\lambda g,\quad \lambda\in \{-1,0,+1\},
	\end{equation}
	where $\Ric$ is the Ricci tensor associated with $g$ and $\n_g$ is the Levi-Civita connection. A gradient Ricci soliton is called respectively \textit{expanding, steady or shrinking} depending on whether $\lambda=-1$, $\lambda=0$ or $\lambda=1$. In this paper we will be only concerned  with the gradient steady Ricci solitons.

	\begin{remark}
		As we mentioned in the introduction, it easily follows from the maximum principle that any \textit{compact} gradient steady soliton $(M,g,f)$ is necessary Ricci flat with $\Ric=0$ and we can take $f\equiv 0$. Since we are interested in the \textit{genuine} gradient solitons, i.e., with $\Ric\neq 0$, we are forced to consider non-compact manifolds. A geometrically important and natural setting in this case is to consider \textit{complete} gradient steady solitons $(M,g,f)$, and this will be our standing assumption through the most of the paper.
	\end{remark}

	If the Riemannian structure $(M,g)$ underlying a gradient Ricci soliton $(M,g,f)$ is a part of a K\"ahler structure $(M,g,J)$, then we say that $(M,g,J,f)$ is a gradient \textit{K\"ahler-Ricci} soliton, or KRS for short. It is well-known that on a K\"ahler background equation~\eqref{eq:RS} can be rewritten as
	\begin{equation}\label{eq:KRS}
	\rho_{\omega}+\i\pd\bar{\pd}f=2\lambda\omega.
	\end{equation}
	Here $\rho_\omega$ is the Ricci form~--- the curvature of the anticanonical bundle $-K_M$, which can be computed locally as
	\[
	\rho_\omega = \i\pd\bar{\pd}\log|\sigma|^2_g,
	\]
	where $\sigma$ is a local holomorphic $(n,0)$-form.
	The term $\i\pd\bar{\pd}f$ can be expressed as $\frac{1}{2}\mc L_{X}\omega$, where $X=\n_g f$ is a gradient vector field. Furthermore, vector field $JX$ is necessarily Killing and holomorphic, see, e.g., Bryant~\cite{br-08}.

	Now we impose the toric symmetry ansatz on a K\"ahler manifold $(M,g,J)$, which we will keep throughout the paper.
	\begin{definition}\label{def:toric_soliton}
		A toric K\"ahler manifold $(M,g,J ,\T^n)$ is a \textit{gradient steady toric K\"ahler-Ricci soliton} if there exists a function $f\in C^\infty(M,\R)$ such that the vector field $\n_g f-\i J\n_g f$ is holomorphic and
			\begin{equation}\label{eq:KRS_steady}
			\rho_{\omega}+\i\pd\bar{\pd}f=0.
			\end{equation}
	\end{definition}
	
	Vector field $\n_g f-\i J\n_g f$ being holomorphic is equivalent to the vanishing $(\n^2_g f)^{2,0}=0$ of the $(2,0)$-type part of the Hessian of $f$. Since we only consider gradient steady Ricci solitons in this paper, we often will not mention it explicitly, and simply call the manifolds of Definition~\ref{def:toric_soliton} \textit{toric K\"ahler-Ricci solitons (KRS).} Let $(M,g,J ,\T^n,f)$ be a toric KRS. Since $(g,J )$ are $\T^n$-invariant, by averaging $f$ with respect to the action of $\T^n$, we can assume that $f$ is also $\T^n$ invariant and still solves~\eqref{eq:KRS_steady}.
	\begin{proposition}\label{prop:toric_KRS_invariant}
		Let $(M,g,J ,\T^n,f)$ be a gradient steady toric K\"ahler-Ricci soliton with an invariant soliton function $f\in C^\infty(M,\R)$. Then there exists $v\in \mf t$ and constant $c$ such that
		\begin{equation}\label{eq:soliton_vf}
		J\n_g f=X_v,\quad f=\la \bm{\mu},v\ra+c.
		\end{equation}
		Furthermore, there exist $\xi\in \mf t^*$ such that in the coordinates $\{x^1,\dots,x^n,\theta_1,\dots,\theta_n\}$ of Lemma~\ref{lm:kahler_coordinates}
		\begin{equation}\label{eq:soliton_eq_invariant}
		d(\log\det \mb G_{ij}+\la \bm{\mu},v\ra)=\xi^iJ^*\left(d\theta_i\right).
		\end{equation}
		Conversely, if a K\"ahler toric manifold $(M,g,J ,
		T^n)$ satisfies~\eqref{eq:soliton_eq_invariant} on $M_0\subset M$ for $f$ given by~\eqref{eq:soliton_vf}, then $(M,g,J ,\T^n,f)$ is a gradient steady toric KRS.
	\end{proposition}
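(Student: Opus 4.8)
The plan is to work entirely on the free locus $M_0$ in the coordinates $\{x^i,\theta_i\}$ of Lemma~\ref{lm:kahler_coordinates}, supplemented by the holomorphic coordinates $w_i=y_i+\i\theta_i$, where $y_i=\pd u/\pd x^i$ and $\bm G_{ij}=\pd^2u/\pd x^i\pd x^j$. I would split the statement into its three parts: the existence of the constant $v\in\mf t$ and the formula for $f$; the reduction of the steady equation to~\eqref{eq:soliton_eq_invariant}; and the converse. The two identities $J^*d\theta_i=\sum_j\bm G_{ij}dx^j$ and $J^*dx^i=-\sum_j(\bm G^{-1})^{ij}d\theta_j$ from~\eqref{eq:kahler_coordinates} will be the computational engine throughout.

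For the first part I start from the invariance of $f$. Since $df$ annihilates every $X_w$, the gradient $\n_g f$ is $g$-orthogonal to the orbits, i.e.\ horizontal. By Proposition~\ref{prop:moment_properties} we have $g(JX_v,X_w)=\omega(X_v,X_w)=0$, so $J$ carries the vertical distribution onto the horizontal one; hence $J\n_g f$ is \emph{vertical} and can be written as $J\n_g f=\sum_i a^i\pd_{\theta_i}$ with $a^i$ functions on $N_0$. Computing $\n_g f=-J(J\n_g f)$ in the $\pd_{x^k}$-frame via~\eqref{eq:kahler_coordinates} and matching it against $\n_g f=\sum_{k,j}(\bm G^{-1})^{kj}f_{,j}\,\pd_{x^k}$ forces $a^i=f_{,i}$. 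The crucial input is the holomorphicity built into Definition~\ref{def:toric_soliton}: rewriting $\n_g f-\i J\n_g f$ in the $\pd_{w_i}$-frame yields $2\sum_i f_{,i}\,\pd_{w_i}$, and holomorphicity of this field makes each coefficient $f_{,i}$ a holomorphic function. Being real and independent of the $\theta_i$, every $f_{,i}$ is then constant. Setting $v=\sum_i f_{,i}\,e^i$ gives $J\n_g f=X_v$, and integrating $df=\sum_i f_{,i}\,dx^i=\sum_i v_i\,dx^i$ together with $x^i=\la\bm\mu,e^i\ra$ produces $f=\la\bm\mu,v\ra+c$, which is~\eqref{eq:soliton_vf}.

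For the second part I compute the Ricci form in these coordinates. The Legendre duality $y_i=u_{,i}$ realizes the K\"ahler potential as the Legendre transform of $u$, whose complex Hessian in the frame $w_i=y_i+\i\theta_i$ is $g_{i\bar j}=\tfrac14(\bm G^{-1})^{ij}$; consequently $\log\det(g_{i\bar j})=-\log\det\bm G+\mathrm{const}$ and $\rho_\omega=\i\pd\bar{\pd}\log\det\bm G$. The steady equation~\eqref{eq:KRS_steady} therefore reads $\i\pd\bar{\pd}(\log\det\bm G+f)=0$. For a $\T^n$-invariant $F=F(y)$ one has $\i\pd\bar{\pd}F=\tfrac{\i}{4}\sum_{i,j}F_{y_iy_j}\,dw_i\wedge d\bar w_j$, so pluriharmonicity is equivalent to $F$ being affine in the $y$-variables. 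Thus $\log\det\bm G+f=\sum_i\xi^i y_i+\mathrm{const}$ for some constants $\xi^i$; differentiating, using $f=\la\bm\mu,v\ra+c$, and invoking $dy_i=J^*d\theta_i$ from~\eqref{eq:pluriharmonic_theta} gives precisely~\eqref{eq:soliton_eq_invariant} with $\xi=\sum_i\xi^i e_i\in\mf t^*$.

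The converse runs this backwards. Equation~\eqref{eq:soliton_eq_invariant} says $d\big(\log\det\bm G+\la\bm\mu,v\ra-\sum_i\xi^i y_i\big)=0$, so with $f$ as in~\eqref{eq:soliton_vf} the function $\log\det\bm G+f$ is affine in $y$, hence pluriharmonic, giving $\rho_\omega+\i\pd\bar{\pd}f=0$ on the dense set $M_0$ and therefore on all of $M$ by continuity; and since $f=\la\bm\mu,v\ra+c$ is linear in the $x^i$, the coefficients $f_{,i}$ are constant, so by the computation of the first part $\n_g f-\i J\n_g f=2\sum_i f_{,i}\,\pd_{w_i}$ is holomorphic, confirming that $(M,g,J,\T^n,f)$ is a gradient steady toric KRS. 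The main obstacle I anticipate is the bookkeeping in the coordinate computation of $\rho_\omega$—pinning down the Legendre duality, the factor in $g_{i\bar j}=\tfrac14(\bm G^{-1})^{ij}$, and the attendant sign—and the holomorphicity-implies-constant step for $v$, which is the one place where the soliton hypothesis (rather than mere invariance) is genuinely used.
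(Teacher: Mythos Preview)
Your proposal is correct and follows essentially the same route as the paper: both arguments show $J\n_g f$ is vertical by invariance, use the holomorphicity hypothesis from Definition~\ref{def:toric_soliton} to force the coefficients $f_{,i}$ constant, compute $\rho_\omega=\i\pd\bar\pd\log\det\bm G$, and then deduce that $\log\det\bm G+f$ is affine in the $y_i$. The only cosmetic difference is that the paper obtains the Ricci form formula via the holomorphic volume form $\sigma=dw_1\wedge\dots\wedge dw_n$ and the identity $|\sigma|^2_g=c_n\det\bm G$, whereas you compute $g_{i\bar j}=\tfrac14(\bm G^{-1})^{ij}$ directly from the Legendre transform; these are the same computation in different packaging. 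Your treatment of the converse is in fact slightly more complete than the paper's, since you explicitly verify that $\n_gf-\i J\n_gf$ is holomorphic.
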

	\begin{proof}
		Let $M_0\subset M$ be an open dense set, where the action $\T^n\times M_0\to M_0$ is free. Denote by $N_0=M_0/\T^n$ the orbit space, and by $\til {\bm \mu}\colon N_0\to \mf t^*$ the moment map descended to $N_0$. Recall that $x^i=\la\bm{\mu},e^i\ra$ provide a local coordinate system on $N_0$. We will prove identity~\eqref{eq:soliton_vf} on $M_0$, and by continuity it must hold on the entire $M$.

		Since $f$ is invariant under the action of $\T^n$, we have that $df$ is basic, i.e.,
		\[
		df\in \mathrm{span}_{C^\infty(N_0,\R)}\{dx^1,\dots dx^n\}
		\]
		Since $J$ establishes an isomorphism between the vertical subspaces $\mf t.x\subset TM_0$ and the horizontal subspace $\mathrm{span}\{g^{-1}dx^i\}$, we have that
		\[
		J\n_g f\in \mathrm{span}_{C^\infty(N_0,\R)}\{X_{e^1},\dots X_{e^n}\},\quad J\n_g f=v_i(x)X_{e^i}.
		\]
		On the other hand, $X_{e^i}$ is the real part of a holomorphic vector field by the toric assumption, and $J\nabla_g f$ is the real part of a holomorphic vector field by the soliton assumption. Thus the real-valued functions $v_i(x)$ are holomorphic, and hence constant, proving that
		\[
		J\nabla_gf=X_v
		\]
		for some $v\in \mf t$. Equivalently, we can rewrite this identity as $df=-\omega(X_v,\cdot)$. Since by the definition of the moment map $\omega(X_v,\cdot)=-d\la\bm{\mu},v\ra$, we conclude that up to an additive constant
		\[
		f=\la {\bm\mu},v\ra=x^iv_i,
		\]
		where $v_i$ are the components of $v$ relative to the fixed basis of $\mf t$.

		Next, since $(M,g,J ,f)$ is a gradient steady toric K\"ahler-Ricci soliton, we have
		\[
		\rho_\omega+\i\pd\bar{\pd}f=0.
		\]

		\begin{lemma}\label{lm:ricci_form_coordinates}
			The Ricci form on $M_0$ is given by
			\begin{equation}\label{eq:ricci_form_coordinates}
			\rho_\omega=\i\pd\bar{\pd}\log\det\bm G_{ij},
			\end{equation}
			where $\bm G_{ij}\in \Sym^2\mf t$ is as in the proof of Lemma~\ref{lm:kahler_coordinates}, and the determinant is computed relative to the local affine coordinates $\{x^i\}$ on $N_0$.
		\end{lemma}

		\begin{proof}[Proof of the lemma.]
			Both sides of~\eqref{eq:ricci_form_coordinates} are globally defined on $M_0$, thus it suffices to prove it locally. Consider a neighbourhood of a free orbit $U(\T^n.x)$ and identify it with a neighborhood of real torus $U(1)^n\subset (\C^*)^n$. This identification provides local holomorphic coordinates
			\[
			\{y_i+\i \theta_i\}
			\]
			as in the proof of Lemma~\ref{lm:kahler_coordinates}.

		This coordinates determine a distinguished holomorphic volume form
		\[
		\sigma=(y_1+\i \theta_1)\wedge\dots\wedge(y_n+\i \theta_n).
		\]
		Given the explicit form of the metric $g$ in the coordinates $\{x^1,\dots,x^n,\theta_1,\dots,\theta_n\}$ we find
		\[
		|\sigma|^2_g=\frac{\i^{n^2}\sigma\wedge \bar{\sigma}}{dV_g}=c_n \frac{(dy_1\wedge\dots \wedge dy_n)\wedge (d\theta_1\wedge\dots \wedge d\theta_n)}{(dx^1\wedge\dots \wedge dx^n)\wedge (d\theta_1\wedge\dots \wedge d\theta_n)}=c_n\frac{dy_1\wedge\dots \wedge dy_n}{dx^1\wedge\dots \wedge dx^n},
		\]
		where $dV_g$ is the Riemannian volume form on $M$, $c_n$ is a constant, and the last fraction is considered as a ratio of two sections of the determinant bundle $\Lambda^n(T^*N_0)$ of the orbit space. It remains to observe that according to the proof of Lemma~\ref{lm:kahler_coordinates}
		\[
		dy_i=\bm{G}_{ij}dx^j,
		\]
		hence $|\sigma|^2_g=c_n\det{\bm G}$ and
		\[
		\rho_\omega=\i\pd\bar{\pd}\log\det\bm{G}.
		\]
		Lemma~\ref{lm:ricci_form_coordinates} is proved.
		\end{proof}

		Since $\rho_\omega=\i\pd\bar{\pd}\log\det \bm{G}_{ij}$, the soliton equation~\eqref{eq:KRS_steady} implies that the real-valued function $\Phi:=\log\det \bm{G}_{ij}+\la\bm{\mu},v\ra$ is pluriharmonic:
		\[
		\i\pd\bar{\pd}\Phi=0.
		\]
		On the other hand, being $\T^n$-invariant, function $\Phi$ can be expressed as a function of the real coordinates $y_i$. Since $y_i$ are themselves real pluriharmonic, $\Phi$ must be a linear function of $y_i$:
		\[
		\Phi=y_i\xi^i+c,
		\]
		for some $\xi\in \mf t^*$, furthermore, such $\xi$ is unique. Therefore we have
		\[
		\log\det \bm{G}_{ij}+\la\bm{\mu},v\ra=y_i\xi^i+c.
		\]
		Using the local form of $g$ and $J$ given by~\eqref{eq:kahler_coordinates} we can equivalently express this identity as
		\[
		d(\log\det \bm{G}_{ij}+\la\bm{\mu},v\ra)=\xi^iJ^*\left(d\theta_i\right).
		\]
		as claimed.

		Conversely, if a K\"ahler toric manifold $(M,g,J ,\T^n)$ admits $v\in\mf t$ and $\xi\in \mf t^*$ such that~\eqref{eq:soliton_eq_invariant} holds, then $\rho_\omega+\i\pd\bar{\pd}f=0$ for $f=\la\bm{\mu}, v\ra$, so that $(M,g,J )$ is a gradient steady K\"ahler-Ricci soliton.
	\end{proof}

	\begin{corollary}[Local weighted Monge-Amp\`ere equation]\label{cor:MA_eq}
		If $(M,g,J ,\T^n,f)$ is a gradient steady toric K\"ahler-Ricci soliton with an invariant soliton function $f\in C^\infty(M,\R)$, and $u(x)\colon U_\alpha\subset N_0\to\R$ is a locally defined symplectic potential on $N_0$ so that:
		\[
		\bm G_{ij}=\frac{\pd^2 u}{\pd x^i\pd x^j}
		\]
		\[
		y_i=\frac{\pd u}{\pd x^i},
		\]
		then function $u(x)$ solves the Monge-Amp\`ere equation
		\begin{equation}\label{eq:MA}
		\log\det \frac{\pd^2 u}{\pd x^i\pd x^j}=-v_ix^i+\xi_i\frac{\pd u}{\pd x^i}+c.
		\end{equation}
		where $v\in\mf t$ and $\xi\in \mf t^*$ are as in Proposition~\ref{prop:toric_KRS_invariant} and $c$ is a constant.
	\end{corollary}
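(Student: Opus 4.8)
The plan is to read off the Monge-Amp\`ere equation directly from the integrated soliton identity established inside the proof of Proposition~\ref{prop:toric_KRS_invariant}; the corollary is essentially just its expression in terms of a local symplectic potential. Recall that in that proof we showed the $\T^n$-invariant function $\Phi:=\log\det\bm G_{ij}+\la\bm\mu,v\ra$ is pluriharmonic, and hence, being a function of the pluriharmonic coordinates $y_i$ alone, must be affine-linear in them:
\[
\log\det\bm G_{ij}+\la\bm\mu,v\ra=y_i\xi^i+c
\]
on the free locus $M_0$, for the covector $\xi\in\mf t^*$ and the constant $c$ supplied by the proposition. This is the only substantive input; everything else is substitution.

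First I would substitute the defining relations of the symplectic potential. By hypothesis $\bm G_{ij}=\pd^2 u/\pd x^i\pd x^j$, so that $\log\det\bm G_{ij}=\log\det\big(\pd^2 u/\pd x^i\pd x^j\big)$. Next, using $x^i=\la\bm\mu,e^i\ra$ together with the index convention $v=v_ie^i$, I rewrite the soliton term as $\la\bm\mu,v\ra=v_i\la\bm\mu,e^i\ra=v_ix^i$. Finally the hypothesis $y_i=\pd u/\pd x^i$ converts the right-hand side into $y_i\xi^i=\xi^i\,\pd u/\pd x^i$. Moving $\la\bm\mu,v\ra=v_ix^i$ across then yields
\[
\log\det\frac{\pd^2 u}{\pd x^i\pd x^j}=-v_ix^i+\xi^i\frac{\pd u}{\pd x^i}+c,
\]
which is precisely~\eqref{eq:MA}.

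There is no genuine obstacle here: all of the analytic and geometric content already lives in Proposition~\ref{prop:toric_KRS_invariant}, and this corollary merely transcribes it. The only points one must keep straight are, first, that a local potential $u$ with the stated properties exists at all~--- this is exactly the integrability argument preceding Lemma~\ref{lm:kahler_coordinates}, where the symmetry of $\bm G$ makes the $1$-form $y_i\,dx^i$ closed, so that locally $y_i=\pd u/\pd x^i$ and consequently $\bm G_{ij}=\pd^2 u/\pd x^i\pd x^j$~--- and, second, the bookkeeping of the upper/lower index conventions for $v\in\mf t$ and $\xi\in\mf t^*$, which accounts for the precise placement of signs and indices in~\eqref{eq:MA}.
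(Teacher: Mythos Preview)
Your proposal is correct and is exactly the paper's intended argument: the corollary carries no separate proof in the paper and follows immediately from the integrated identity $\log\det\bm G_{ij}+\la\bm\mu,v\ra=y_i\xi^i+c$ obtained inside the proof of Proposition~\ref{prop:toric_KRS_invariant}, together with the substitutions $\bm G_{ij}=u_{,ij}$ and $y_i=u_{,i}$.
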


	\begin{remark}
		A symplectic potential $u(x)$ is defined up to the addition of an affine summand $(v_0)_ix^i+c_0$. If $\til u(x)=u(x)+(v_0)_ix^i+c_0$ is a new local potential, then $\til u(x)$ solves the same equation~\eqref{eq:MA} as $u(x)$ but with a new constant $\til c=c-\la\xi,v_0\ra$.
	\end{remark}

	We see that, if $U_\alpha$ is an open cover of $N_0$, and $u_{\alpha}$ are local symplectic potentials, then the differences of the locally defined functions
	\[
	F_\alpha:=\log\det \frac{\pd^2 u}{\pd x^i\pd x^j}+v_ix^i-\xi_i\frac{\pd u}{\pd x^i}
	\]
	yield a cocycle $\{F_\alpha-F_\beta\}\in C^1(N_0,\R)$. In the special case when $N_0$ is simply connected, this cocycle is a coboundary and by Lemma~\ref{lm:hessian_simply_connected} the Hessian structure $(N_0, \bm G_{ij})$ is given by a globally defined symplectic potential $u\colon N_0\to \R$. Therefore we can formulate a refinement of Corollary~\ref{cor:MA_eq}

	\begin{corollary}[Global weighted Monge-Amp\`ere equation]\label{cor:MA_eq_simply_connected}
		Let $(M,g,J ,\T^n,f)$ is a gradient steady toric K\"ahler-Ricci soliton with an invariant soliton function $f\in C^\infty(M,\R)$. Then there exists a convex function $u(x)$ on the universal cover $\til{N_0}$ of $N_0$, $v\in \mf t$, $\xi \in \mf t^*$ and $c\in\R$ such that
				\[
				\bm G_{ij}=\frac{\pd^2 u}{\pd x^i\pd x^j}
				\]
				\[
				y_i=\frac{\pd u}{\pd x^i},
				\]
		and $u(x)$ solves the Monge-Amp\`ere equation on $\til{N_0}$
		\begin{equation}\label{eq:MA_global}
		\log\det \frac{\pd^2 u}{\pd x^i\pd x^j}=-v_px^p+\frac{\pd u}{\pd x^q}\xi^q+c.
		\end{equation}
	\end{corollary}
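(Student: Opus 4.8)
The plan is to globalize the local weighted Monge-Amp\`ere equation of Corollary~\ref{cor:MA_eq} by passing to the universal cover of the orbit space, where the symplectic potential becomes a single globally defined function. Before doing so I would separate the objects that are already canonical on $N_0$ from those that are only local. By Proposition~\ref{prop:affine} the descended moment map $\til{\bm\mu}\colon N_0\to\mf t^*$ is a development map, so the moment coordinates $x^i=\la\til{\bm\mu},e^i\ra$ are globally defined on $N_0$; similarly the parallel data $v\in\mf t$ and $\xi\in\mf t^*$ furnished by Proposition~\ref{prop:toric_KRS_invariant} are intrinsic, and by Corollary~\ref{cor:hessian} the Hessian tensor $\bm G=\bm\pi_* g$ is a global metric. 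The only object that can fail to be global is the symplectic potential $u$ with $\bm G=D^2u$: its local representatives $u_\alpha$ differ by affine functions, and the obstruction to gluing them lives in $H^1(N_0,\mc A)$.

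To remove this obstruction I would pull everything back along the covering map $p\colon\til{N_0}\to N_0$. The pulled-back connection $p^*D$ makes $\til{N_0}$ a simply connected affine manifold for which $\til{\bm\mu}\circ p$ is a development map (Lemma~\ref{lm:development_map}), so the coordinates $x^i$ and the parallel sections $v,\xi$ remain globally defined on $\til{N_0}$. Since $\til{N_0}$ is simply connected, $H^1(\til{N_0},\mc A)=0$, and Lemma~\ref{lm:hessian_simply_connected} produces a single convex function $u\colon\til{N_0}\to\R$ with $\bm G=D^2u$, unique up to an affine summand. I would then simply \emph{define} $y_i:=\pd u/\pd x^i$; because $d(\pd u/\pd x^i)=\bm G_{ij}\,dx^j$ reproduces the defining relation $dy_i=\bm G_{ij}\,dx^j$ for the conjugate coordinates, this choice is consistent with the local construction up to the additive constants that are the intrinsic ambiguity of the $y_i$. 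Thus the two relations $\bm G_{ij}=\pd^2u/\pd x^i\pd x^j$ and $y_i=\pd u/\pd x^i$ required by the statement hold globally on $\til{N_0}$.

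It remains to promote the local constants $c_\alpha$ of Corollary~\ref{cor:MA_eq} to a single global constant. For this I would consider the globally defined smooth function
\[
F:=\log\det\frac{\pd^2 u}{\pd x^i\pd x^j}+v_p x^p-\frac{\pd u}{\pd x^q}\xi^q
\]
on $\til{N_0}$. On each chart the global potential differs from a local symplectic potential $u_\alpha$ by an affine function; since affine functions have vanishing Hessian and constant gradient, on that chart $F$ equals $F_\alpha$ up to an additive constant, where $F_\alpha\equiv c_\alpha$ is the local constant of Corollary~\ref{cor:MA_eq}. Hence $F$ is locally constant, and since $\til{N_0}$ is connected it is globally constant, $F\equiv c$. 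Rearranging $F\equiv c$ yields precisely the Monge-Amp\`ere equation~\eqref{eq:MA_global}.

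The one genuinely substantive step is the passage to the universal cover, which trivializes $H^1(\til{N_0},\mc A)$ and hence guarantees a global potential; everything afterwards is the elementary observation that a locally constant function on a connected space is constant. The only point that needs care is bookkeeping the affine ambiguity of $u$ consistently, so that the same normalization underlies both $y_i=\pd u/\pd x^i$ and $F\equiv c$.
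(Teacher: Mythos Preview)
Your proposal is correct and follows essentially the same route as the paper: pass to the universal cover $\til{N_0}$, invoke Lemma~\ref{lm:hessian_simply_connected} to obtain a global potential $u$, and then observe that the difference $F=\log\det u_{,ij}+v_px^p-u_{,q}\xi^q$ is locally constant (by the local Corollary~\ref{cor:MA_eq}) and hence globally constant on the connected space $\til{N_0}$. Your treatment is slightly more explicit than the paper's about the affine ambiguity between the global $u$ and the local $u_\alpha$, which is a welcome clarification, but the substance is the same.
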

    \begin{proof}
        Let $\bm\pi\colon M_0\to N_0$ be projection onto the orbit space. $N_0$ is equipped with the moment map $\til{\bm\mu}\colon N_0\to \mf t^*$ and the Hessian structure of Corollary~\ref{cor:hessian}. By Lemma~\ref{lm:hessian_simply_connected}, on the universal cover $\til N\to N$ the metric $\bm\pi_*g$ can be expressed as $g=D^2u(x)$ for some $u\in C^\infty(\til N_0,\R)$. By the above Corollary~\ref{cor:MA_eq}, in a neighborhood $U_\alpha$ of any point on $\til N$, function $u(x)$ satisfies
        \[
        \log\det \frac{\pd^2 u}{\pd x^i\pd x^j}=-v_px^p+\frac{\pd u}{\pd x^q}\xi^q+c_\alpha.
        \]
        It remains to note that vector and covector $v$ and $\xi$ being locally constant, i.e. $D$-parallel, must be globally constant since $\til N$ is simply connected. As all the terms of the above equation except possibly for $c_\alpha$ are globally defined, it follows that $c_\alpha=c$ is also globally defined.
    \end{proof}

	Our goal in the rest of the paper will be to derive a priori estimates for the solutions of the equation~\eqref{eq:MA_global} and to use those to classify \textit{complete} solutions.

	\section{Geometry of the Bakry-\'Emery Ricci tensor}\label{s:bakry-emery}

    In this section we will make a short detour into the geometry of metric measure spaces and Bakry-\'Emery Ricci tensors $\Ric_\phi$. This geometric setting is clearly relevant for the study of Ricci solitons, since by definition a Ricci soliton $(M,g,\phi)$ has a constant Bakry-\'Emery tensor $\Ric_\phi=\lambda g$. However, it is worth pointing out, that it is not the toric KRS itself which we will consider through the lens of Bakry-\'Emery geometry, but its orbit space $N_0=M_0/\T^n$ (see Sections~\ref{s:ma} and \ref{s:main}). Furthermore the relevant weight function $\phi$ will not be the soliton potential $f$.

	Let $(N,g,e^{-\phi}dV_g)$ be a Riemannian manifold with a smooth positive measure $e^{-\phi}dV_g$ defined by $\phi\in C^\infty(N,\R)$. Such data is called a (smooth) \textit{metric measure space}. To a metric measure space we can associate several natural geometric objects.

	\begin{definition}
		The \textit{Bakry-\'Emery Ricci tensor} associated to $(N,g,e^{-\phi}dV_g)$ is defined by
		\[
		\Ric_\phi:=\Ric+\n_g^2\phi,
		\]
		where $\Ric$ is the Ricci tensor of $(M,g)$. The \textit{weighted Laplacian} of $(N,g,e^{-\phi}dV_g)$ is the operator $\Delta_\phi\colon C^\infty(N,\R)\to C^\infty(N,\R)$ given by
		\[
		\Delta_\phi F:=\Delta F-\la \n_g\phi,\n_g F\ra_g
		\]
	\end{definition}

	\begin{remark}
		Tensor $\Ric_\phi$ is also known as the $\infty$-Bakry-\'Emery tensor. For certain geometric applications a more general $N$-Bakry-\'Emery tensor $\Ric_{\phi,N}:=\Ric+\n_g^2\phi-\frac{1}{N}d\phi\otimes d\phi$ happens to be more useful, see, e.g.,~\cite{lo-03}.
	\end{remark}

	Fixing a base point $p_0$ on a smooth metric measure space $(N,g,e^{-\phi}dV_g)$ as above let $r(x)=d(p_0,x)$ be the distance from $p_0\in N$. Denote by $B_r(p_0)$ an open ball of radius $r$ around $p_0$ and by $\pd B_r(p_0)$ its boundary. Define
	\begin{equation}\label{eq:mean_curvature}
	m_\phi(x)=\Delta_\phi r=\Delta r - \la \n_g \phi, \n_g r\ra_g
	\end{equation}
	the weighted mean curvature of the geodesic sphere centered at $p_0$. We have the following fundamental comparison result for the weighted mean curvature due to Wei and Wylie~\cite{we-wy-09}. This result can be though of as a generalization of the Laplacian comparison theorem to the metric measure spaces.

	\begin{theorem}[Mean Curvature Comparison I,~\cite{we-wy-09}]\label{th:mean_curvature_I}
		Consider a metric measure space with $\Ric_\phi\geq 0$ and fix $\delta>0$. Then for $x$ such that $r(x)>\delta$ we have
		\[
		m_\phi(x)\leq \sup \{m_\phi(x_0)\ |\ {x_0\in \pd B_{\delta}(p_0)}\}
		\]
	\end{theorem}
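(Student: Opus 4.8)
The plan is to prove the stronger statement that the weighted mean curvature $m_\phi$ is monotonically non-increasing along every unit-speed minimizing geodesic emanating from $p_0$; the asserted inequality then follows at once by evaluating $m_\phi$ at the radius-$\delta$ point of the minimizing geodesic joining $p_0$ to $x$. First I would fix a unit-speed minimizing geodesic $\gamma$ with $\gamma(0)=p_0$ and work at a radius occurring before the first cut point, where the distance function $r$ is smooth and $|\n_g r|=1$. Writing $\pd_r=\n_g r$ for the radial field and $m(r)=\Delta r$ for the ordinary mean curvature, I would apply the Bochner formula to $r$. Since $|\n_g r|^2\equiv 1$, this yields
\[
0=|\n_g^2 r|^2+\la \n_g r,\n_g\Delta r\ra+\Ric(\pd_r,\pd_r).
\]
Along $\gamma$ the middle term is $m'(r)$, and because $\n_g^2 r$ annihilates $\pd_r$, applying Cauchy--Schwarz to $\n_g^2 r$ on the remaining $(n-1)$ directions gives $|\n_g^2 r|^2\geq m(r)^2/(n-1)$. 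This produces the classical Riccati inequality
\[
m'(r)\leq -\frac{m(r)^2}{n-1}-\Ric(\pd_r,\pd_r).
\]

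Next I would differentiate the weighted quantity $m_\phi=m-\pd_r\phi$ along $\gamma$. Since $\gamma$ is a geodesic, $\pd_r(\pd_r\phi)=\n_g^2\phi(\pd_r,\pd_r)$, so
\[
m_\phi'(r)=m'(r)-\n_g^2\phi(\pd_r,\pd_r)\leq -\frac{m(r)^2}{n-1}-\Ric_\phi(\pd_r,\pd_r).
\]
Invoking the hypothesis $\Ric_\phi\geq 0$ and discarding the manifestly non-positive term $-m(r)^2/(n-1)$, I conclude $m_\phi'(r)\leq 0$, i.e.\ $m_\phi$ is non-increasing along $\gamma$. Consequently, for any $x$ with $r(x)>\delta$, choosing the minimizing geodesic $\gamma$ from $p_0$ to $x$ and noting that $\gamma(\delta)\in\pd B_\delta(p_0)$, I obtain
\[
m_\phi(x)=m_\phi(\gamma(r(x)))\leq m_\phi(\gamma(\delta))\leq \sup\{m_\phi(x_0)\ |\ x_0\in\pd B_\delta(p_0)\},
\]
which is the claim.

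The one genuine subtlety, and the step I expect to require the most care, is that $r$ is only smooth away from the cut locus of $p_0$, so both $m_\phi$ and the computation above are a priori valid only there. I would handle this in the standard way: the minimizing geodesic $\gamma|_{(0,r(x))}$ meets the cut locus at most at its endpoint, so the Riccati computation is valid on the entire open segment, and at points where $r$ fails to be smooth the inequality $m_\phi(x)\leq \sup_{\pd B_\delta(p_0)}m_\phi$ is to be interpreted and established in the barrier (support-function) sense, which is what is needed for the later applications. It is worth emphasizing that only the \emph{sign} $m_\phi'\leq 0$ is used, and the unweighted term $-m^2/(n-1)$ only helps; in particular no bound on $\phi$ itself is required, which is precisely what makes this weighted comparison robust and distinguishes it from the classical Laplacian comparison theorem.
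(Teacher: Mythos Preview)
Your proof is correct and follows essentially the same route as the paper: apply the Bochner formula to $r$, combine with $\Ric_\phi\geq 0$ to deduce that $m_\phi$ is non-increasing along minimizing geodesics from $p_0$, and conclude. The only cosmetic difference is that you pass through the full Riccati inequality with the $-m^2/(n-1)$ term before discarding it, whereas the paper simply uses $|\n_g^2 r|^2\geq 0$ directly; your added discussion of the cut locus is a welcome bit of care that the paper omits.
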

	\begin{proof}
		The proof of this theorem is short and instructive, so we include it for the reader's convenience following~\cite{we-wy-09}. By the Bochner formula
		\[
		\frac{1}{2}\Delta|\n_g u|^2=|\n_g^2u|^2+\la \n_gu,\n_g(\Delta u)\ra_g+\Ric(\n_g u,\n_g u)
		\]
		applied to $u(x)=r$ we observe that
		\[
		0=|\nabla_g^2 r|^2+\nabla_gr\cdot \Delta r+\Ric(\n_g r,\n_g r).
		\]
		Since $|\nabla_g^2 r|^2\geq 0$ and $\Ric_\phi(\n_g,\n_g)=\Ric(\n_g,\n_g)+\n_g^2\phi(\n_g r,\n_g r)\geq 0$, we conclude
		\[
		\n_g r\cdot(m_\phi(x))= \n_gr\cdot (\Delta r-\la \n_g \phi,\n_gr\ra_g)=\n_g r\cdot \Delta r-\n^2_g \phi(\n_g r,\n_g r)\leq 0
		\]
		Therefore, $m_\phi(x)$ is non-increasing along the geodesics from $p_0$, and $m_\phi(x)$ does not exceed the maximum of $m_\phi$ on $\pd B_\delta(p_0)$ as long as $r(x)>\delta$.
	\end{proof}

	While Theorem~\ref{th:mean_curvature_I} will suffice for our purposes in this paper, it provides no information on the blow-up behavior of $m_\phi(x)$ near $p_0$. It is worth noticing that~\cite{we-wy-09} contains several refinements, which allow for a more precise control of $m_\phi(x)$ under reasonable assumptions on $\phi$. For example, one has the following:

	\begin{theorem}[Mean Curvature Comparison II,~\cite{we-wy-09}]
		Assume $\Ric_\phi\geq 0$ and $|\phi|\leq C$ along a minimal geodesic segment from $p_0$ to $x$. Then
		\[
		m_\phi(x)\leq \frac{n+4C-1}{r(x)}.
		\]
	\end{theorem}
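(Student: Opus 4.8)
The plan is to rerun the Bochner argument from the proof of Theorem~\ref{th:mean_curvature_I}, but this time retaining the full strength of the Hessian term instead of merely discarding it, and then to integrate the resulting Riccati inequality against the flat \emph{model} volume density. Fix a minimal unit-speed geodesic $\gamma$ issuing from $p_0$, let $r$ denote arclength, and abbreviate $m(r)=\Delta r$, while $\phi,\phi',\phi''$ stand for $\phi\circ\gamma$ and its $r$-derivatives. First I would apply the Bochner formula to $u=r$ as before; since $|\n_g r|\equiv 1$ forces $\n_g^2 r(\n_g r,\cdot)=0$, the Hessian of $r$ is effectively an $(n-1)\times(n-1)$ symmetric tensor of trace $m$, so Cauchy--Schwarz gives $|\n_g^2 r|^2\ge m^2/(n-1)$. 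Feeding this into Bochner yields the Riccati inequality
\[
m'+\frac{m^2}{n-1}+\Ric(\n_g r,\n_g r)\le 0 .
\]
Because $\Ric_\phi\ge 0$ means $\Ric(\n_g r,\n_g r)\ge-\phi''$ along $\gamma$, this becomes $m'+\frac{m^2}{n-1}\le \phi''$.

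Next I would multiply by the flat model density $r^2$ and complete the square. Writing $m_H=(n-1)/r$ for the Euclidean mean curvature, one computes
\[
\frac{d}{dr}\bigl(m\,r^2\bigr)=m'r^2+2rm\le -\frac{r^2}{n-1}\,(m-m_H)^2+(n-1)+r^2\phi'' \le (n-1)+r^2\phi'' ,
\]
where the favorable square term has simply been discarded. Integrating from $0$ (the boundary term vanishes since $m\sim(n-1)/r$ forces $m r^2\to 0$) gives $m(r)\,r^2\le (n-1)r+\int_0^r s^2\phi''(s)\,ds$.

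Then I would integrate by parts twice to remove all derivatives of $\phi$, obtaining $\int_0^r s^2\phi''\,ds=r^2\phi'(r)-2r\phi(r)+2\int_0^r\phi\,ds$, all endpoint contributions at $s=0$ vanishing. Dividing by $r^2$ and subtracting $\phi'(r)$ produces exactly the weighted mean curvature on the left, the uncontrolled term $\phi'(r)$ cancelling in the process:
\[
m_\phi(r)=m(r)-\phi'(r)\le \frac{n-1}{r}-\frac{2\phi(r)}{r}+\frac{2}{r^2}\int_0^r\phi\,ds .
\]
Finally, invoking $|\phi|\le C$ bounds each of the last two terms by $2C/r$, yielding $m_\phi(r)\le (n-1+4C)/r$, which is the claim.

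The step I expect to be the genuine obstacle is precisely the one the above is designed to circumvent: we are given no control whatsoever on the gradient $\phi'$ (only on $\phi$ itself), so neither $\phi''$ nor $(\phi')^2$ can be estimated pointwise, and the Bakry--\'Emery Riccati inequality cannot be compared termwise to the Riccati equation of an $(n+4C)$-dimensional model. The resolution---multiplying by the model Jacobi factor $r^2$, discarding the completed square, and integrating by parts so that $\phi'$ cancels and $\phi''$ is traded for the supremum bound on $\phi$---is what produces the effective dimension shift $n\mapsto n+4C$; carefully tracking the sharp constant $4C=2C+2C$ through these two integrations is the delicate bookkeeping, carried out in~\cite{we-wy-09}.
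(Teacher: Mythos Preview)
The paper does not supply its own proof of this statement; it is merely quoted as a refinement of Theorem~\ref{th:mean_curvature_I} with a citation to Wei--Wylie~\cite{we-wy-09}, and is not used anywhere later. Your argument is correct and is essentially the original Wei--Wylie proof: multiply the Riccati inequality $m'+\frac{m^2}{n-1}\le\phi''$ by the Euclidean area weight $r^2$, complete the square and discard it, integrate, and then integrate by parts twice so that $\phi''$ is replaced by $\phi$ while the surviving $\phi'(r)$ cancels against the one in $m_\phi=m-\phi'$. The bookkeeping of the constant $4C=2C+2C$ you describe is exactly how it arises.
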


	The following Liouville-type estimate is well-known in the classical Riemannian case (when $\phi=0$). For our applications, we need to re-derive it for a smooth metric measure space with nonnegative Bakry-\'Emery tensor.

	\begin{theorem}\label{th:lioville}
		Assume that a metric measure space $(N,g,e^{-\phi}dV_g)$ has a nonnegative Bakry-\'Emery Ricci tensor $\Ric_\phi\geq 0$. Let $s(x)$ be a nonnegative smooth function defined on an open geodesic ball $B_R(p_0)\subset N$, $R>1$ such that
		\begin{equation}\label{eq:diff_ineq_s}
		\Delta_\phi s\geq s^2
		\end{equation}
		Given $\delta\in (0,1)$, there exists a constant
		\[
		C=C(\delta, m_\delta),\quad m_\delta:=\sup \{m_\phi(x)\ |\ {x\in \pd B_{\delta}(p_0)}\}
		\]
		such that function $s(x)$ satisfies inequality
		\[
		s(p_0)\leq C R^{-1}.
		\]
		In particular, if $(N,g)$ is complete, then any globally defined function $s(x)$ satisfying~\eqref{eq:diff_ineq_s} is identically zero.
	\end{theorem}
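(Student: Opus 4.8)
The plan is to bound $s(p_0)$ by a maximum-principle argument on $B_R(p_0)$ tested against a radial cutoff, feeding in the differential inequality $\Delta_\phi s\geq s^2$ and the mean-curvature bound of Theorem~\ref{th:mean_curvature_I}. Set $a:=\max\{m_\delta,0\}$; by Theorem~\ref{th:mean_curvature_I} we have $m_\phi=\Delta_\phi r\leq a$ wherever $r\geq\delta$ (in the barrier sense along the cut locus). I would choose a nonincreasing radial cutoff $\eta=\eta(r)\geq 0$ on $\bar{B}_R(p_0)$ which equals the positive constant $(R-\delta)^2$ on $\bar{B}_\delta(p_0)$, equals $(R-r)^2$ for $r\geq\delta$, and vanishes on $\pd B_R$ (smoothing the corner at $r=\delta$). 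The two properties I need on the region $\{r\geq\delta\}$ are $|\n\eta|^2=4\eta$ and, using the comparison, $-\Delta_\phi\eta=2(R-r)m_\phi-2\leq 2Ra$.

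Set $w:=\eta s\geq 0$. Since $w$ vanishes on $\pd B_R$, it attains its maximum over $\bar{B}_R$ at an interior point $x_0$; if $w(x_0)=0$ then $s\equiv 0$ on $B_R$ and there is nothing to prove, so assume $w(x_0)>0$, hence $s(x_0)>0$. The key point is that $x_0$ cannot lie in the interior of $B_\delta$: there $\eta$ is constant, so at such a maximum $0\geq \Delta_\phi w=\eta\,\Delta_\phi s\geq \eta s^2>0$, a contradiction. Thus $r(x_0)\geq\delta$ and the mean-curvature comparison is available at $x_0$. Assuming first $x_0\notin\mathrm{Cut}(p_0)$, the condition $\n w=0$ gives $\eta\,\n s=-s\,\n\eta$, and $\Delta_\phi w\leq 0$ expands, via $\Delta_\phi(\eta s)=s\Delta_\phi\eta+\eta\Delta_\phi s+2\la\n\eta,\n s\ra$ and this substitution, to
\[
0\geq s\,\Delta_\phi\eta+\eta s^2-\frac{2s}{\eta}|\n\eta|^2.
\]
Dividing by $s>0$ and inserting $|\n\eta|^2=4\eta$ and $-\Delta_\phi\eta\leq 2Ra$ yields $\eta s\leq 8+2Ra$ at $x_0$, i.e. $w(x_0)\leq 8+2Ra$.

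Evaluating $w$ at the centre, $(R-\delta)^2 s(p_0)=w(p_0)\leq w(x_0)\leq 8+2Ra$, so for $R>1$
\[
s(p_0)\leq \frac{8+2Ra}{(R-\delta)^2}\leq \frac{C(\delta,m_\delta)}{R},
\]
which is the asserted estimate. For the complete case, if $s$ is globally defined on $N$ then this bound holds for every $R>1$ with $\delta\in(0,1)$ fixed; letting $R\to\infty$ forces $s(p_0)\leq 0$, and since $s\geq 0$ and $p_0$ was arbitrary, $s\equiv 0$.

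The main obstacle is precisely the unavailability of any upper bound on $m_\phi$ inside $B_\delta$, where $\Delta_\phi r$ blows up near $p_0$; this is what dictates a cutoff that is constant (rather than decaying) on $B_\delta$, and the interior-maximum contradiction above is the device confining $x_0$ to the region $\{r\geq\delta\}$ where Theorem~\ref{th:mean_curvature_I} applies. The two remaining technical points are routine: the corner of $\eta$ at $\pd B_\delta$ is removed by smoothing (the relevant inequalities survive since the corner is concave), and the non-smoothness of $r$ on $\mathrm{Cut}(p_0)$ is handled by Calabi's trick, replacing $r$ by the smooth upper barrier $r_\epsilon(\cdot)=\epsilon+d(\gamma(\epsilon),\cdot)$ along a minimizing geodesic $\gamma$ from $p_0$ to $x_0$, for which the comparison and the maximality of $x_0$ persist, and then letting $\epsilon\to 0$.
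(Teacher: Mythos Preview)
Your proof is correct and follows the same overall strategy as the paper: apply the maximum principle to $\eta(r)\,s$ for a radial cutoff, feed in $\Delta_\phi s\ge s^2$, and control the term involving $\Delta_\phi r$ via the comparison of Theorem~\ref{th:mean_curvature_I}. The one genuine variation is in how you handle the inner ball $B_\delta$, where no bound on $m_\phi$ is available. The paper takes $\eta(t)=R$ on $[0,\delta]$ and $\eta(t)=R\big(1-(\tfrac{t-\delta}{R-\delta})^2\big)^2$ on $[\delta,R]$; this cutoff is $C^1$ with $\eta'\equiv 0$ on $[0,\delta]$, so the dangerous term $\eta'\Delta_\phi r$ vanishes identically on $B_\delta$ and the comparison is only invoked where it is valid, with no need to locate the maximum. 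Your cutoff $(R-r)^2$ is simpler but has a concave corner at $r=\delta$, and you compensate by the contradiction argument showing the maximum cannot lie in $\{r<\delta\}$ (and, at $r=\delta$, using the constant $(R-\delta)^2$ as a smooth upper barrier for $\eta$, which is exactly what ``the corner is concave'' buys you). Both devices accomplish the same thing; the paper's choice avoids the barrier discussion at the cost of a slightly more elaborate cutoff, while yours trades a cleaner $\eta$ for the extra step of pushing the maximum out of $B_\delta$.
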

	\begin{proof}
		Let $\eta(t)\colon [0,R]\to [0,+\infty)$ be the cut-off function defined by
		\begin{equation}
		\eta(t)=\begin{cases}
		R,\quad t\leq \delta\\
		R(1-(\frac{t-\delta}{R-\delta})^2)^2, t> \delta
		\end{cases}
		\end{equation}
		We readily check that $\eta(t)$ decreases from $\eta(0)=R$ to $\eta(R)=0$, and given that $\delta\in (0,1)$, $R>1$ there exists a constant $c_0(\delta)$ such that
		\begin{enumerate}
			\item $0\leq -\eta'(t)< c_0(\delta)$;
			\item $|\eta''(t)|<c_0(\delta)$;
			\item $(\eta')^2\eta^{-1}<c_0(\delta)$.
		\end{enumerate}
		With the above cut-off function we consider the non-negative function.
		\[
		\Phi(x):=\eta(r)s(x)
		\]
		Since $s(x)$ is nonnegative and $\eta(r)$ vanishes for $r\geq R$, $\Phi(x)$ attains a local maximum at a point $p\in B_R(p_0)$, so that at $p$ we have
		\begin{equation}
		\begin{split}
		\n_g \Phi(p)&=0\\
		\Delta \Phi(p) &\leq 0.
		\end{split}
		\end{equation}
		Expanding the above identities, we find that at $p$
		\begin{equation}\label{eq:pf_1st_order}
		\eta \n_g s+\eta' s \n_g r=0\\
		\end{equation}
		and
		\begin{equation}
		\begin{split}
		0&\geq \Delta_\phi\Phi=\Delta \Phi-\la \n_g \phi, \n_g\Phi\ra_g\\&=
		s\Delta\eta +2\la \n_g\eta, \n_gs\ra_g+\eta \Delta s-\eta\la \n_g\phi, \n_gs\ra_g-s\la \n_g\phi,\n_g\eta\ra_g\\
		&=s\Delta_\phi\eta+2\la \n_g\eta, \n_gs\ra_g+\eta\Delta_\phi s\\
		&=(\eta'\Delta_\phi r+\eta'')s+2\eta'\la \n_gr,\n_gs\ra_g+\eta \Delta_\phi s.
		\end{split}
		\end{equation}
		By the Laplace comparison theorem for the Bakry-\'Emery tensor (Theorem~\ref{th:mean_curvature_I}), in the region $r>\delta$ we have
		\begin{equation}\label{eq:laplace_comparison}
		\Delta_\phi r=m_\phi(x)\leq \sup \{m_\phi(y)\ |\ {y\in \pd B_{\delta}(p_0)}\}=m_\delta
		\end{equation}
		Therefore, since $-c_0(\delta)< \eta'\leq 0$, and $\eta'(t)=0$ for $t\in(0,\delta)$, we necessarily have in the entire ball $B_R(p_0)$ that
		\[
		\eta'\Delta_\phi r\geq -c_0(\delta)m_\delta
		\]
		Furthermore, by~\eqref{eq:pf_1st_order} at $p\in B_R(p_0)\subset M$ we have $\eta'\la \n_gr,\n_gs\ra_g=-\eta^{-1}(\eta')^2 s$, hence with the use of the differential inequality $\Delta_\phi s\geq s^2$, we find
		\[
		\begin{split}
		0\geq \eta^{-1}(-m_\delta c_0(\delta)+\eta'')\Phi(p)-2\eta^{-2}(\eta')^2\Phi(p)
		+\eta^{-1}\Phi(p)^2
		\end{split}
		\]
		It follows that for our choice of the cut-off function
		\[
		\Phi(p)\leq m_\delta c_0(\delta)-\eta''+2\eta^{-1}(\eta')^2\leq C(\delta,m_\delta )
		\]
		Therefore, since $\Phi$ attains its maximum at $p$, at the base point $p_0$ we have
		\[
		\Phi(p_0)=R {s(p_0)}\leq \Phi(p)\leq C(\delta,m_\delta)
		\]
		and $s(p_0)\leq C(\delta,m_\delta)R^{-1}$ as claimed.

		If $(N,g)$ is complete, then for any $p_0\in N$ and $R>0$ there is the geodesic ball $B_R(p_0)\subset N$ so that $s(p_0)\leq C(\delta,m_\delta)R^{-1}$. Therefore we conclude $s(p_0)=0$ and the function $s(x)$ vanishes identically.
	\end{proof}

	\section{Geometry of the Monge-Amp\`ere equation}\label{s:ma}

	Let $(N,D)$ be a \textit{simply-connected} Hessian manifold with the globally defined Hessian metric
	\[
	g=D^2u,\quad u\in C^\infty(N,\R).
	\]
	Fix a development map $\bm{\iota}\colon N\to \R^n$ into an $n$-dimensional space with the coordinates $\{x^1,\dots,x^n\}$. Differential forms $\{\bm\iota^*dx^i\}$ provide a $D$-parallel trivialization of $T^*N$. Abusing the notation slightly we will drop symbol $\bm{\iota}^*$ and treat $\{dx^i\}$ as differential forms on $N$, and $\{x^i\}$ as a local coordinate system. For a function(tensor) $T$, we denote by subscript $T_{,i}$ the derivative $D_{\pd/\pd x^i}T$. We also write $g_{ij}:=u_{,ij}$ for the components of the metric tensor $g$ relative to the basis $\{dx^i\}$ and $g^{ij}$ for the components of its inverse $g^{-1}$, so, e.g., $g^{ij}u_{,jk}=\delta_{k}^i$ and $g_{ij,k}=g_{ik,j}=u_{,ijk}$.

	Assume that the smooth convex function $u\colon N\to \R$ defining the metric $g$ satisfies equation
	\begin{equation}
	\log\det D^2u=-\la v, x\ra + \la du,\xi\ra + c,\qquad \xi\in \R^n, v\in (\R^n)^*.
	\end{equation}
	Equivalently, in the coordinates $\{x^i\}$, function $u(x)$ solves the Monge-Amp\`ere equation
	\begin{equation}\label{eq:main}
	\log\det (u_{,ij}) = -v_jx^j + u_{,i}\xi^i + c.
	\end{equation}

	We have the following immediate differential identity as a consequence of~\eqref{eq:main}.

	\begin{lemma}[Key differential identity]\label{lm:diff_id}
		Assume that function $u$ solves the Monge-Amp\`ere equation~\eqref{eq:main}. Then $u$ satisfies the following identity:
		\begin{equation}\label{eq:1order}
		g^{qp}u_{,pqi}=-v_{i}+u_{,iq}\xi^q
		\end{equation}
	\end{lemma}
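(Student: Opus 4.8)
The plan is to obtain~\eqref{eq:1order} by differentiating the Monge-Amp\`ere equation~\eqref{eq:main} once in the $D$-parallel coordinate direction $\partial/\partial x^i$ and matching the two sides. The computation is elementary; the only genuine ingredient is the classical formula for the derivative of the logarithm of a determinant.

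\emph{Differentiating the left-hand side.} For a smoothly varying, invertible symmetric matrix $G=(g_{pq})$, Jacobi's formula gives $\partial_i \log\det G = \tr\!\big(G^{-1}\partial_i G\big) = g^{pq}\,\partial_i g_{pq}$. I would apply this with $g_{pq}=u_{,pq}$, which is legitimate because $u$ is strictly convex, so $G$ is positive definite and in particular invertible. Since the coordinates $\{x^i\}$ are $D$-parallel, the coordinate vector fields $\partial/\partial x^i$ commute, so $\partial_i g_{pq}=u_{,pqi}$ with the third partial derivatives symmetric in all three indices. Hence the derivative of the left-hand side of~\eqref{eq:main} is exactly $g^{pq}u_{,pqi}=g^{qp}u_{,pqi}$.

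\emph{Differentiating the right-hand side.} The sections $v\in(\R^n)^*$ and $\xi\in\R^n$ are $D$-parallel, so in these coordinates their components $v_j$ and $\xi^q$ are constants. Thus $\partial_i(-v_jx^j)=-v_i$, $\partial_i(u_{,q}\xi^q)=u_{,qi}\xi^q$, and $\partial_i c=0$. Using the symmetry $u_{,qi}=u_{,iq}$ of the third derivatives, the derivative of the right-hand side becomes $-v_i+u_{,iq}\xi^q$. Equating the two computed expressions yields~\eqref{eq:1order}.

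\emph{Where the difficulty (if any) lies.} There is no analytic obstacle here: the identity is a one-line differentiation of the defining equation, valid pointwise on all of $N$. The only points requiring a word of justification are that $G=(u_{,ij})$ is invertible (guaranteed by strict convexity) and that partial derivatives in the affine coordinates commute while the third derivatives are totally symmetric (guaranteed because $\{x^i\}$ are $D$-parallel and $D$ is torsion-free). Consequently the entire content of the lemma is the bookkeeping of indices in Jacobi's formula, and I would expect the write-up to be only a few lines long.
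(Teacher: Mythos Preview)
Your proof is correct and is essentially identical to the paper's own argument: both differentiate~\eqref{eq:main} in the $x^i$-direction and invoke Jacobi's formula $(\det A)'=\det A\cdot\tr(A^{-1}A')$ (equivalently $\partial_i\log\det G=g^{pq}\partial_i g_{pq}$) to identify the left-hand side as $g^{qp}u_{,pqi}$, while the right-hand side differentiates trivially since $v$, $\xi$, and $c$ are constants.
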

	\begin{proof}
		Recall that if $A=A(t)$ is a smooth matrix-valued function, such that $A(t)$ is invertible, then $(\det A)'=\det A \cdot \tr (A^{-1}A')$. Therefore, applying operator $D_{\pd/\pd x^\alpha}$ to both sides of~\eqref{eq:main} we find
		\[
		g^{qp}u_{,pqi}=-v_{i}+u_{,iq}\xi^q
		\]
		as claimed.
	\end{proof}

	Now we express basic geometric quantities with respect to $g$ in terms of the local affine coordinates $\{x^i\}$ and the function $u$.

	\begin{lemma}[Riemannian quantities]\label{lm:riem_id}
		Let $(N,D)$ be an affine manifold with a Hessian metric $g=D^2u$ for $u(x)\in C^\infty(N,\R)$. The Christoffel symbols of $g_{ij}$ are given by
		\begin{equation}\label{eq:christoffel}
		\Gamma_{ij}^k=\frac{1}{2}g^{lk}u_{,ijl}
		\end{equation}
		The full Riemannian tensor of $g_{ij}$ is
		\begin{equation}\label{eq:riem}
		\Rm_{ijkl}=\frac{1}{4}g^{pq}(u_{,ilp}u_{,jkq}-u_{,jlp}u_{,ikq}).
		\end{equation}
		The Ricci tensor of $g_{ij}$ is
		\begin{equation}\label{eq:ricci}
		\Ric_{ij}=g^{jl}\Rm_{ijkl}=\frac{1}{4}g^{pq}g^{mn}\left(u_{,ipm}u_{,jqm}-u_{,ijq}u_{,pmn}\right)
		\end{equation}
		The scalar curvature of $g_{ij}$ is
		\begin{equation}\label{eq:scalar}
		s=\frac{1}{4}(|u_{,ijk}|_g^2-|g^{mn}u_{,imn}|_g^2)
		\end{equation}
		If furthermore, $u(x)$ solves the Monge-Amp\`ere equation~\eqref{eq:main}, then we have additionally
		\begin{equation}
		\begin{split}
			\Gamma_{ik}^k&=(\log |g|^{1/2})_{,i}=\frac{1}{2}(u_{,ij}\xi^j-v_i)\\
			\Ric_{ij}&=\frac{1}{4}g^{pq}g^{mn}u_{,ipm}u_{,jqm}-\frac{1}{4}(u_{,ijp}\xi^p-g^{pq}u_{,ijq}v_p)\\
			s&=\frac{1}{4}\left(|u_{,ijk}|_g^2-|u_{,ij}\xi^j-v_i|^2_g\right)
		\end{split}
		\end{equation}
	\end{lemma}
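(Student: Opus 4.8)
The plan is to carry out a pointwise tensorial computation entirely in the $D$-parallel affine coordinates $\{x^i\}$ furnished by the development map, in which the connection $D$ has vanishing Christoffel symbols. In these coordinates every comma-derivative $u_{,i_1\cdots i_k}$ is an ordinary iterated partial derivative of the scalar $u$, hence \emph{totally symmetric} in all its indices. This symmetry is the mechanism behind every formula, so I would flag it at the outset and use it repeatedly.

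First I would compute the Christoffel symbols of the Levi-Civita connection of $g$ from the standard expression $\Gamma_{ij}^k=\tfrac12 g^{kl}(g_{il,j}+g_{jl,i}-g_{ij,l})$. Since $g_{ij}=u_{,ij}$ gives $g_{ij,k}=u_{,ijk}$, and this is symmetric in $i,j,l$, the three terms collapse into the single expression $u_{,ijl}$, yielding $\Gamma_{ij}^k=\tfrac12 g^{kl}u_{,ijl}$. Contracting $k=l$ and recognizing $g^{kl}u_{,ikl}=g^{kl}g_{kl,i}=(\log\det g)_{,i}$ gives the trace identity $\Gamma_{ik}^k=(\log|g|^{1/2})_{,i}$ at once.

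Next I would substitute these symbols into $R^l_{ijk}=\partial_i\Gamma^l_{jk}-\partial_j\Gamma^l_{ik}+\Gamma^l_{im}\Gamma^m_{jk}-\Gamma^l_{jm}\Gamma^m_{ik}$. The step I expect to be the main obstacle to keep clean is the cancellation of the top-order terms: differentiating $\Gamma^l_{jk}=\tfrac12 g^{lm}u_{,jkm}$ produces a piece $\tfrac12 g^{lm}u_{,ijkm}$, and because $u_{,ijkm}$ is totally symmetric this is invariant under $i\leftrightarrow j$ and therefore cancels upon antisymmetrization, leaving only the inverse-metric-derivative terms $\tfrac12(g^{lm})_{,i}u_{,jkm}-\tfrac12(g^{lm})_{,j}u_{,ikm}$. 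Using $(g^{lm})_{,i}=-g^{lp}g^{mq}u_{,ipq}$ and combining these with the quadratic $\Gamma\Gamma$ terms, the whole expression organizes into the stated purely quadratic form in $g^{pq}u_{,\cdots}u_{,\cdots}$; lowering the index with $g$ produces $\Rm_{ijkl}$. The Ricci tensor $\Ric_{ij}$ and scalar curvature $s$ then follow by the appropriate $g$-contractions, which are routine bookkeeping once $\Rm$ is in hand.

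Finally, for the Monge-Amp\`ere specializations I would invoke the key differential identity of Lemma~\ref{lm:diff_id}, $g^{qp}u_{,pqi}=-v_i+u_{,iq}\xi^q$, together with the once-differentiated equation $(\log\det g)_{,i}=-v_i+u_{,ij}\xi^j$. The trace formula is then immediate. For $\Ric$, the ``weighted Laplacian'' factor $g^{pq}g^{mn}u_{,pmn}$ in the second Ricci term equals $g^{pq}(-v_p+u_{,pr}\xi^r)=\xi^q-g^{pq}v_p$ by the key identity, and contracting this against $u_{,ijq}$ gives exactly $u_{,ijp}\xi^p-g^{pq}u_{,ijq}v_p$. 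For $s$, the same substitution turns $|g^{mn}u_{,imn}|_g^2$ into $|u_{,ij}\xi^j-v_i|_g^2$. No completeness or global hypothesis enters here; the entire lemma is a local identity valid wherever $u$ is smooth and strictly convex.
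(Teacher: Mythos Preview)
Your proposal is correct and follows exactly the approach the paper takes: compute $\Gamma$ from the standard formula using the total symmetry of $u_{,ijk}$, then plug into the Riemann tensor formula (noting the fourth-order terms cancel by symmetry), contract, and finally invoke Lemma~\ref{lm:diff_id} for the Monge--Amp\`ere specializations. The paper's own proof is terse---it simply declares the curvature formulas ``a matter of straightforward computations'' and then applies Lemma~\ref{lm:diff_id}---so your write-up is in fact a more detailed rendering of the same argument rather than a different one.
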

	\begin{proof}
		Keeping in mind that $g_{ij}=u_{,ij}$, the identity $\Gamma_{ij}^k=\frac{1}{2}g^{kl}\left(g_{jl,k}+g_{il,j}-g_{ij,l}\right)$ immediately implies the formula for the Christoffel symbols. With the expression for the Christoffel symbols at hand, the formulas for the Riemannian, Ricci and scalar curvatures are a matter of straightforward computations. These formulas hold for any Hessian metric.

		Now, if $u(x)$ solves the Monge-Amp\`ere equation, then by the differential identity of Lemma~\ref{lm:diff_id}
		\[
		g^{qp}u_{,pqi}=-v_i+u_{,iq}\xi^q
		\]
		so we have
		\[
		\Gamma_{ik}^k=\left(\log \det g^{1/2}\right)_{,i}=\frac{1}{2}(-v_i+u_{,iq}\xi^q).
		\]
		This immediately implies the refined formulas for $\Ric$ and $s$.
	\end{proof}

	\begin{remark}
		In the case of the standard Monge-Amp\`ere equation $\log\det u_{,ij}=c$, we have $v=\xi=0$, and the above calculations imply that $\Ric\geq 0$. This was the essential ingredient in the foundational work of Calabi~\cite{ca-58} allowing for what is now known as \textit{Calabi's third order estimate}. Unlike the ``standard'' setting $v=\xi =0$, we do not have a favorable sign for the Ricci curvature corresponding to the solution $u(x)$ of the Monge-Amp\`ere equation~\eqref{eq:main}. This makes the main steps of~\cite{ca-58} invalid. The crucial fact is that we still have non-negativity of the \textit{Bakry-\'Emery} Ricci tensor $\Ric_\phi$ for an appropriate choice of function $\phi$. This observation was made by Kolesnikov~\cite[Theorem\,1.1]{ko-14} in a more general setting, when the linear functions $V(x)=v_ix^i$ and $W(y)=u_{,i}\xi^i=y_i\xi^i$ are replaced by arbitrary convex functions. For the readers convenience, we give a direct proof in our special setting.
	\end{remark}

	\begin{lemma}[Hessian \& Laplace operator]
		Assume that function $u(x)$ solves~\eqref{eq:main}. Let $\n_g$ and $\Delta$ be the Levi-Civita connection and the Laplace-Beltrami operator associated with $g=D^2u$. Then for $\phi\in C^\infty(N,\R)$
		\begin{equation}\label{eq:hessian}
		(\n_g^2\phi)_{ij}=\phi_{,ij}-\Gamma_{ij}^k \phi_{,k}=\phi_{,ij}-\frac{1}{2}g^{lk}u_{,ijl}\phi_{,k}
		\end{equation}

		\begin{equation}\label{eq:laplace}
		\begin{split}
		\Delta_g \phi&=
		g^{pq}\phi_{,pq}-\frac{1}{2}\xi^p\phi_{,p}+\frac{1}{2}g^{pq}v_p\phi_{,q}
		\end{split}
		\end{equation}
	\end{lemma}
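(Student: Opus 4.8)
The plan is to handle the two formulas separately: the Hessian identity~\eqref{eq:hessian} is essentially definitional and requires only the Christoffel symbols already computed in Lemma~\ref{lm:riem_id}, while the Laplacian identity~\eqref{eq:laplace} is obtained by tracing the Hessian and invoking the key differential identity of Lemma~\ref{lm:diff_id}. First I would establish~\eqref{eq:hessian}. In the $D$-parallel coordinate system $\{x^i\}$ the covariant Hessian of a function $\phi$ is, by the definition of the Levi-Civita connection,
\[
(\n_g^2\phi)_{ij}=\phi_{,ij}-\Gamma_{ij}^k\phi_{,k},
\]
and substituting the expression $\Gamma_{ij}^k=\frac{1}{2}g^{lk}u_{,ijl}$ from~\eqref{eq:christoffel} yields the second equality at once. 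I would note that this step uses nothing beyond the Hessian structure; the Monge-Amp\`ere equation~\eqref{eq:main} enters only in the Laplacian computation.

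For~\eqref{eq:laplace} I would take the metric trace of the Hessian,
\[
\Delta_g\phi=g^{ij}(\n_g^2\phi)_{ij}=g^{ij}\phi_{,ij}-g^{ij}\Gamma_{ij}^k\phi_{,k},
\]
so the whole task reduces to simplifying the contracted Christoffel symbol $g^{ij}\Gamma_{ij}^k=\frac{1}{2}g^{lk}\bigl(g^{ij}u_{,ijl}\bigr)$. Here the full symmetry of the third derivative $u_{,ijl}$ lets me read off the inner contraction directly from the key differential identity~\eqref{eq:1order}, giving $g^{ij}u_{,ijl}=-v_l+u_{,lq}\xi^q$. Substituting this and using $g^{lk}u_{,lq}=\delta_q^k$ collapses the second term to $\frac{1}{2}\xi^k$, leaving
\[
g^{ij}\Gamma_{ij}^k=-\frac{1}{2}g^{lk}v_l+\frac{1}{2}\xi^k.
\]
Plugging this back into the trace and relabeling the summation indices $l,k\mapsto p,q$ produces exactly~\eqref{eq:laplace}.

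The computation is entirely routine, so I do not anticipate any genuine obstacle; the only substantive point is the recognition that the contraction $g^{ij}u_{,ijl}$ occurring inside $g^{ij}\Gamma_{ij}^k$ is precisely the left-hand side of the differential identity~\eqref{eq:1order}. This is the single place where the Monge-Amp\`ere equation is used, and it is what converts the geometric Laplace-Beltrami operator into the first-order ``drift'' form, with the drift determined by $v$ and $\xi$. This drift structure is exactly what will later allow $\Delta_g$ to be matched against a weighted Laplacian $\Delta_\phi$ for a suitable choice of weight $\phi$, tying this section to the Bakry-\'Emery framework developed in Section~\ref{s:bakry-emery}.
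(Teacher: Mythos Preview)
Your proposal is correct and follows exactly the approach of the paper's own proof: the Hessian formula is read off from the Christoffel symbols of Lemma~\ref{lm:riem_id}, and the Laplacian is obtained by tracing and invoking the differential identity~\eqref{eq:1order}. You have simply supplied the intermediate steps that the paper leaves implicit.
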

	\begin{proof}
		The Hessian formula follows immediately from the expression for the Christoffel symbols of Lemma~\ref{lm:riem_id}. The Laplacian formula~\eqref{eq:laplace} is then obtained by taking the trace of~\eqref{eq:hessian} and using the first differential identity of Lemma~\ref{lm:diff_id}.
	\end{proof}

	Now we are ready to prove the key geometric property of the solutions to the Monge-Amp\`ere equation~\eqref{eq:main}.

	\begin{proposition}[Positivity of the Bakry-\'Emery Ricci {tensor,~\cite[Theorem\,1.1]{ko-14}}]\label{prop:ric_positive}
		Let $(N,D)$ be a simply-connected affine manifold equipped with a development map $\bm{\iota}\colon N\to\R^n$ and a Hessian metric $g=D^2u(x)$, such that function $u(x)$ solves the Monge-Amp\`ere equation
		\[
		\log\det D^2u=-\la v,x\ra+\la du,\xi\ra+c,\quad \xi\in\R^n, v\in (\R^n)^*.
		\]
		Consider the function
		\[
		\phi=\frac{1}{2}\left(u_{,p}\xi^p+v_qx^q\right).
		\]
		Then
		\[
		(\Ric_\phi)_{ij}=\frac{1}{4}g^{pq}g^{kl}u_{,ipk}u_{,jql}.
		\]
		In particular, $\Ric_\phi\geq 0$.
	\end{proposition}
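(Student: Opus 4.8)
The plan is to compute $(\Ric_\phi)_{ij}=\Ric_{ij}+(\n_g^2\phi)_{ij}$ directly, feeding in the two formulas already established: the refined Ricci formula from the Riemannian quantities lemma,
\[
\Ric_{ij}=\frac{1}{4}g^{pq}g^{kl}u_{,ipk}u_{,jql}-\frac{1}{4}\left(u_{,ijp}\xi^p-g^{pq}u_{,ijq}v_p\right),
\]
and the Hessian formula $(\n_g^2\phi)_{ij}=\phi_{,ij}-\frac{1}{2}g^{lk}u_{,ijl}\phi_{,k}$. The first term of $\Ric_{ij}$ already matches the claimed answer, so the entire content of the proposition is the identity $(\n_g^2\phi)_{ij}=\frac{1}{4}(u_{,ijp}\xi^p-g^{pq}u_{,ijq}v_p)$, which should exactly cancel the second term of $\Ric_{ij}$.

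First I would differentiate the weight $\phi=\frac{1}{2}(u_{,p}\xi^p+v_qx^q)$. Since $\xi$ and $v$ are $D$-parallel (constant in the coordinates $\{x^i\}$) and $x^q_{,i}=\delta_i^q$, this gives
\[
\phi_{,i}=\frac{1}{2}\left(u_{,pi}\xi^p+v_i\right),\qquad \phi_{,ij}=\frac{1}{2}u_{,ijp}\xi^p.
\]
Substituting into the Hessian formula, the crucial simplification is the contraction identity $g^{lk}u_{,pk}=\delta_p^l$, which follows from $g_{pk}=u_{,pk}$ and the definition of $g^{lk}$ as the inverse metric; with it, the product $g^{lk}u_{,ijl}u_{,pk}\xi^p$ collapses to $u_{,ijp}\xi^p$. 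Using symmetry of the third derivatives of $u$, the two resulting $\xi$-contributions combine to $\frac{1}{4}u_{,ijp}\xi^p$, while the $v$-contribution becomes $-\frac{1}{4}g^{pq}u_{,ijq}v_p$ after relabeling indices. This yields precisely $(\n_g^2\phi)_{ij}=\frac{1}{4}(u_{,ijp}\xi^p-g^{pq}u_{,ijq}v_p)$, and adding it to $\Ric_{ij}$ cancels the non-quadratic terms, leaving $(\Ric_\phi)_{ij}=\frac{1}{4}g^{pq}g^{kl}u_{,ipk}u_{,jql}$ as claimed.

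For the nonnegativity I would contract against an arbitrary vector $\zeta^i$ and set $A_{pk}:=u_{,ipk}\zeta^i$, a symmetric matrix depending on $\zeta$. Then
\[
(\Ric_\phi)_{ij}\zeta^i\zeta^j=\frac{1}{4}g^{pq}g^{kl}A_{pk}A_{ql},
\]
which is the squared norm of $A$ with respect to the positive-definite inverse metric, hence nonnegative; this establishes $\Ric_\phi\geq 0$.

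There is no real conceptual obstacle here: all the ingredient formulas are already in hand, so the proposition reduces to a bookkeeping computation. The only points requiring care are the contraction identity $g^{lk}u_{,pk}=\delta_p^l$, consistent index relabeling, and remembering that $\xi,v$ are constant so they contribute nothing to $\phi_{,ij}$ beyond the single $\xi$-term. The genuinely clever input, namely the specific choice of weight $\phi=\frac{1}{2}(u_{,p}\xi^p+v_qx^q)$ engineered to cancel exactly the unfavorable terms in $\Ric_{ij}$, is handed to us in the statement.
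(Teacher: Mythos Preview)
Your proposal is correct and follows essentially the same approach as the paper: compute $(\n_g^2\phi)_{ij}$ from the Hessian formula, use the contraction $g^{lk}u_{,pk}=\delta_p^l$ to reduce it to $\frac{1}{4}(u_{,ijp}\xi^p-g^{pq}u_{,ijq}v_p)$, and add this to the refined Ricci formula so that the non-quadratic terms cancel. The only cosmetic difference is that you spell out the nonnegativity via the contraction with a test vector $\zeta$, whereas the paper simply reads it off from the form of the expression.
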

	\begin{proof}
		Computing $\n^2_g\phi$ for $\phi=\frac{1}{2}\left(u_{,i}\xi^i+v_ix^i\right)$ we have
		\[
		\begin{split}
		(\n^2_g\phi)_{ij}&=\phi_{,ij}-\frac{1}{2}g^{lk}u_{,ijl}\phi_{,k}\\&=\frac{1}{2}u_{,pij}\xi^p-\frac{1}{4}(g^{lk}u_{,ijl}u_{,pk}\xi^p+g^{lk}u_{,ijl}v_k)
		\\&=\frac{1}{4}u_{,pij}\xi^p-\frac{1}{4}g^{lk}u_{,ijl}v_k,
		\end{split}
		\]
		where we used that $g^{lk}u_{,pk}=\delta_p^l$. With this calculation and the identities of Lemma~\ref{lm:diff_id} we find
		\[
		(\Ric_\phi)_{ij}=(\Ric+\n^2_g\phi)_{ij}=\frac{1}{4}g^{pq}g^{mn}u_{,ipm}u_{,jqn}\geq 0
		\]
		as claimed.
	\end{proof}

	Proposition~\ref{prop:ric_positive} suggests that the properties of the solutions to the Monge-Amp\`ere equation~\eqref{eq:main} are closely related to the geometry of the smooth metric measure space $(N,g,e^{-\phi}dV_g)$. Supporting this intuition, we give a modified proof of Calabi's third order estimate taking into account the presence of a non-trivial weight function $\phi$. We will closely follow the calculations of Calabi~\cite{ca-58} in the case $v=\xi=0$ making the necessary modifications allowing for nonzero $v$ and $\xi$ in~\eqref{eq:main}. A more straightforward coordinate proof of this inequality can be found in~\cite[\S7]{ko-14}.


	\begin{proposition}\label{prop:diff_ineq_ma}
		Let $(N,D)$ be a Hessian manifold with $g=D^2u$ as above. Assume that $u(x)$ solve the Monge-Amp\`ere equation~\eqref{eq:main} and let
		\[
		\phi=\frac{1}{2}(u_{,p}\xi^p+v_qx^q).
		\]
		Then function
		\[
		\sigma:=|u_{,ijk}|^2
		\]
		satisfies the differential inequality
		\begin{equation}\label{eq:w_laplace_ma}
		\Delta_\phi \sigma\geq \frac{1}{2n}\sigma^2.
		\end{equation}
	\end{proposition}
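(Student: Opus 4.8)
The plan is to adapt Calabi's third order estimate to the weighted setting, using the positivity of $\Ric_\phi$ from Proposition~\ref{prop:ric_positive} in place of the positivity of $\Ric$ that was available in the unweighted case $v=\xi=0$. Throughout I write $T_{ijk}:=u_{,ijk}$ for the totally symmetric cubic tensor, so that $\sigma=|T|^2_g=g^{ia}g^{jb}g^{kc}T_{ijk}T_{abc}$, and I record four facts. First, since $D$ is flat and torsion free, the fourth $D$-derivative $u_{,ijkl}$ is totally symmetric; this is the Codazzi-type identity controlling the failure of $\n_g T$ to be symmetric. Second, the differential identity \eqref{eq:1order} of Lemma~\ref{lm:diff_id} pins the trace of $T$: $g^{ab}T_{abi}=-v_i+g_{iq}\xi^q$. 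Third, differentiating the weight $\phi=\tfrac12(u_{,p}\xi^p+v_qx^q)$ gives $\phi_{,i}=\tfrac12(g_{ip}\xi^p+v_i)$, so that $\n_g\phi$ agrees, modulo the trace identity \eqref{eq:1order}, with half the divergence of $T$; this is what singles out $\phi$ as the correct weight. Finally, tracing the Bakry-\'Emery identity of Proposition~\ref{prop:ric_positive} yields $\tr_g\Ric_\phi=\tfrac14\sigma$.

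First I would expand the rough Laplacian. Since $\n_g g=0$, differentiating $\sigma=|T|^2_g$ twice gives the Bochner-type identity
\[
\Delta_g\sigma=2\la \Delta_g T,T\ra_g+2|\n_g T|^2_g,
\]
where $\Delta_g T=g^{ab}\n_a\n_b T$ is the connection Laplacian. The core computation is to evaluate $\la\Delta_g T,T\ra_g$. Using the Codazzi symmetry of $u_{,ijkl}$ to trade a free index against the contracted one, and the Ricci commutation identity $[\n_a,\n_b]T=-\Rm\ast T$ to move derivatives past each other, one reduces $g^{ab}\n_a\n_b T_{ijk}$ to covariant derivatives of the trace $g^{ab}T_{abi}$ together with curvature contractions. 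By \eqref{eq:riem} the curvature is quadratic in $T$, so its contractions against $T$ contribute quartic-in-$T$ terms. The trace $g^{ab}T_{abi}$ and the traced fourth derivative $g^{ab}u_{,abij}$ are, after differentiating \eqref{eq:1order} and using $\pd_j g^{ab}=-g^{ac}g^{bd}u_{,cdj}$, again expressed as terms quadratic in $T$ plus terms linear in $(v,\xi)$; no genuine fourth-order term survives. Thus $\la\Delta_g T,T\ra_g$ splits into a purely quartic curvature part and a part linear in $(v,\xi)$.

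Next I would pass to the weighted Laplacian $\Delta_\phi\sigma=\Delta_g\sigma-\la\n_g\phi,\n_g\sigma\ra_g$. Because $v,\xi$ are $D$-parallel, $\n_c v_i$ and $\n_c(g_{iq}\xi^q)$ reduce to contractions of $T$ with $v,\xi$, so the drift term $-\la\n_g\phi,\n_g\sigma\ra_g$ is itself linear in $(v,\xi)$. The choice of $\phi$ is made precisely so that this drift cancels the $(v,\xi)$-linear terms produced in the previous step --- the weighted analogue of the cancellation that in Calabi's case $v=\xi=0$ was automatic. What remains is a clean quartic identity $\Delta_\phi\sigma=2|\n_g T|^2_g+Q$, where $Q$ is a quartic contraction of the curvature. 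Reorganizing $Q$ with \eqref{eq:riem} and the Bakry-\'Emery identity of Proposition~\ref{prop:ric_positive}, one exhibits $Q$ as a nonnegative multiple of $|\Ric_\phi|^2_g$ up to manifestly nonnegative remainders; since $\Ric_\phi\geq0$ all of these have the right sign. Finally, the elementary inequality $|\Ric_\phi|^2_g\geq\tfrac1n(\tr_g\Ric_\phi)^2=\tfrac1{16n}\sigma^2$ together with $2|\n_g T|^2_g\geq0$ produces, after tracking constants, the bound $\Delta_\phi\sigma\geq\tfrac1{2n}\sigma^2$.

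The main obstacle is the bookkeeping in the two middle steps: correctly commuting the covariant derivatives and verifying that the many cubic-times-$(v,\xi)$ cross terms cancel exactly against the drift $-\la\n_g\phi,\n_g\sigma\ra_g$. This cancellation is the reason the estimate survives for nonzero $v,\xi$, and it relies essentially on using $\Ric_\phi\geq0$ rather than $\Ric\geq0$, which now fails. A secondary difficulty is the final sign analysis of the leftover quartic $Q$: one must extract a clean positive multiple of $|\Ric_\phi|^2_g$ with the correct coefficient so that the sharp constant $\tfrac1{2n}$ emerges, rather than merely some positive constant.
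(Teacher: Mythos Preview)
Your proposal is correct and follows essentially the same route as the paper: Bochner expansion of $\Delta_g|T|^2$, the Codazzi symmetry $\n_l u_{,ijk}=\n_i u_{,ljk}$ to swap indices, commutation of covariant derivatives producing $\Ric$ and $\Rm$ terms, and the recognition that the $(v,\xi)$-linear contributions from differentiating the trace identity \eqref{eq:1order} split precisely into the drift $\la d\sigma,d\phi\ra_g$ (absorbed into $\Delta_\phi$) and a $\n^2\phi$ contraction that upgrades $\Ric$ to $\Ric_\phi$. The paper carries this out in an orthonormal frame and arrives at the exact identity $\Delta_\phi\sigma=2|\n_l u_{,ijk}|^2+8|\Ric_\phi|^2+8|\Rm|^2$, from which $8|\Ric_\phi|^2\ge \tfrac{8}{n}(\tr_g\Ric_\phi)^2=\tfrac{1}{2n}\sigma^2$ gives the stated bound; your ``quartic remainder $Q$ up to manifestly nonnegative terms'' is exactly $8|\Ric_\phi|^2+8|\Rm|^2$.
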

	\begin{proof}
		Up to a factor of 4, the quantity $\sigma$ can be thought of either as the ``weighted'' scalar curvature $\sigma=4\tr_g( \Ric_\phi)=s+\Delta \phi$, or, equivalently, as the quantity measuring the difference between the two connections $D$ and $\n_g$ canonically attached to $(N,D,g)$:
		\[
		\sigma=4|D-\n_g|^2.
		\]

        For convenience, we choose an orthonormal frame at a given point $p\in N$, so that $g_{ij}=\delta_{ij}$, and in the calculations below denote $\n:=\n_g$. Treating $u_{,ijk}$ as a section of $\Sym^3(T^*N)$, we immediately have
        \begin{equation}\label{eq:pf_ineq0}
        \Delta|u_{,ijk}|^2=2|\n_lu_{,ijk}|^2+2\la \n_l\n_l u_{,ijk},u_{,ijk}\ra_g.
        \end{equation}
        The first term is nonnegative. To compute the second term we observe that the tensor $        \n_lu_{,ijk}=u_{,ijkl}-\frac{1}{2}\left(
        u_{,ijs}u_{,skl}+u_{,isk}u_{,sjl}+u_{,sjk}u_{,sil}.
        \right)
        $ is symmetric in its 4 arguments, in particular
        \begin{equation}\label{eq:A_sym}
        \n_l u_{,ijk}=\n_i u_{,ljk}
        \end{equation}
        Therefore, we can rewrite the rough Laplacian term as follows:
        \begin{equation}\label{eq:pf_ineq1}
        \begin{split}
            \n_l\n_lu_{,ijk}
            &=\n_l\n_i u_{,ljk}=\n_i\n_lu_{,ljk}+(\n_l\n_i-\n_i\n_l)u_{,ljk}\\
            &=\n_i(\n_ju_{,kll})+\Ric_{is}u_{,sjk}+\Rm_{lisj}u_{,lsk}+\Rm_{lisk}u_{,ljs}
        \end{split}
        \end{equation}
        Using the differential identity of Lemma~\eqref{lm:diff_id}, and recalling that $\phi=\frac{1}{2}(v_px^p+u_{,q}\xi^q)$, we compute the first term as
        \begin{equation}\label{eq:pf_ineq2}
        \begin{split}
        \n_i(\n_ju_{,kll})
        &=\n_i\n_j(-v_k+\xi^k)=\frac{1}{2}\n_i(u_{,jks}v_s+u_{,jks}\xi^s)\\
        &=\n_iu_{,jks}\frac{1}{2}(v_s+\xi^s)+u_{,jks}\frac{1}{2}\n_i(v_s+\xi_s)\\
        &= \n_su_{,ijk} (d\phi)_s+u_{,jks}(\n^2\phi)_{is}.
        \end{split}
        \end{equation}
        Substituting~\eqref{eq:pf_ineq2} into~\eqref{eq:pf_ineq1}, and recalling that $\Ric_\phi=\Ric+\n^2\phi$, we obtain
        \[
        \n_l\n_lu_{,ijk}=\n_su_{,ijk}(d\phi)_s+(\Ric_\phi)_{is}u_{,jks}+\Rm_{lisj}u_{,lsk}-\Rm_{liks}u_{,ljs}.
        \]
        Now, evaluating the inner product with $u_{,ijk}$ we find
        \[
        \la\n_l\n_l u_{,ijk},u_{,ijk}\ra=\frac{1}{2}\la d|u_{,ijk}|^2,d\phi\ra+4|\Ric_\phi|^2+4|\Rm|^2,
        \]
        where we used that $(\Ric_\phi)_{ij}=\frac{1}{4}u_{,ipq}u_{,jpq}$ and $\Rm_{ijkl}=\frac{1}{4}(u_{,ilp}u_{,jkp}-u_{,jlp}u_{,ikp})$. Plugging it back into~\eqref{eq:pf_ineq0} we get
        \[
        \Delta|u_{,ijk}|^2=2|\n_lu_{,ijk}|^2+\la d|u_{,ijk}|^2,d\phi\ra+8|\Ric_\phi|^2+8|\Rm|^2.
        \]
        Therefore
        \[
        \Delta_\phi|u_{,ijk}|^2\geq 8|\Ric_\phi|^2.
        \]
        It remains to note that
        \[
        |\Ric_\phi|^2\geq \frac{1}{n}\tr_g(\Ric_\phi)^2=\frac{1}{16 n}(|u_{,ijk}|^2)^2
        \]
        so that $\Delta_\phi|u_{,ijk}|^2\geq \frac{1}{2n}(|u_{,ijk}|^2)^2 $ as required.
	\end{proof}
    \begin{remark}
        The conclusion of Proposition~\ref{prop:diff_ineq_ma} is somewhat weaker than the analogous estimate of Calabi~\cite[Prop.\,1]{ca-58}. There are two reasons for it: first we completely discarded a nonnegative term $|\n_lu_{,ijk}|^2$, which could have been used to strengthen Proposition~\ref{prop:diff_ineq_ma}. Second, more importantly, unlike the classical Riemannian setting, tensors $\Ric_\phi$ and $\Rm$ are not directly related and we cannot bound $|\Ric_\phi|^2$ by a multiple of $|\Rm|^2$ without some a priori information about $\phi$.
    \end{remark}

    \begin{theorem}\label{th:ma_rigid}
        Let $(N,D)$ be a simply-connected affine manifold equipped with a development map $\bm{\iota}\colon N\to\R^n$ and a Hessian metric $g=D^2u(x)$, such that function $u(x)$ solves the Monge-Amp\`ere equation
        \[
        \log\det (u_{,ij}) = -v_jx^j + u_{,i}\xi^i + c,\quad \xi\in\R^n, v\in (\R^n)^*.
        \]
        If the manifold $(N,g)$ is complete, then $\bm\iota\colon N\to\R^n$ is a diffeomorphism, and $u(x)$ is a quadratic polynomial. In particular $(N,D,g)$ is equivalent as a Hessian manifold to $\R^n$ equipped with a flat metric $g$.
    \end{theorem}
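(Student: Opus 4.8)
The plan is to assemble the a priori estimates established above into a Liouville-type rigidity argument. First I would invoke Proposition~\ref{prop:ric_positive} with the weight $\phi=\frac{1}{2}(u_{,p}\xi^p+v_qx^q)$: since $u$ solves the Monge-Amp\`ere equation, the smooth metric measure space $(N,g,e^{-\phi}dV_g)$ has nonnegative Bakry-\'Emery Ricci tensor $\Ric_\phi\geq 0$, placing us in the setting of Section~\ref{s:bakry-emery}. Next I would apply Proposition~\ref{prop:diff_ineq_ma} to the third-order quantity $\sigma:=|u_{,ijk}|^2$, obtaining $\Delta_\phi\sigma\geq\frac{1}{2n}\sigma^2$. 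To match the normalization of the Liouville estimate I would rescale, setting $s:=\frac{1}{2n}\sigma$; a one-line substitution gives $\Delta_\phi s\geq s^2$, so $s$ is a globally defined nonnegative function on the \emph{complete} metric measure space $(N,g,e^{-\phi}dV_g)$ satisfying the hypotheses of Theorem~\ref{th:lioville}. That theorem forces $s\equiv 0$, hence $\sigma\equiv 0$ and $u_{,ijk}\equiv 0$ throughout $N$.

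The vanishing of the third derivatives is the crux of the argument. In the $D$-parallel coordinates $\{x^i\}$ we have $g_{ij,k}=u_{,ijk}=0$, so the metric tensor $g=D^2u$ is constant in these coordinates; consequently $u$ differs from a quadratic polynomial by an affine function, and $g$ is a constant positive-definite symmetric form, hence flat. It remains only to promote the local development map to a global diffeomorphism.

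For this last step I would observe that $g=\bm\iota^*\big(\sum_{i,j} G_{ij}\,dx^i\,dx^j\big)$, where $G_{ij}=u_{,ij}$ is a constant positive-definite matrix and $\{dx^i\}$ is the pullback of the standard coordinate coframe on $\R^n$; thus $\bm\iota$ is a local isometry from $(N,g)$ onto $(\R^n,G)$, with $G$ the induced constant Euclidean metric. Since $G$ is complete on $\R^n$ and $(N,g)$ is complete, the standard covering-space theorem shows that a local isometry out of a complete Riemannian manifold into a connected one is a Riemannian covering map; as $\R^n$ is simply connected and $N$ is connected, this covering has a single sheet, so $\bm\iota$ is a diffeomorphism. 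I expect this final topological step to be the main point requiring care, since it is the only place where completeness enters geometrically rather than through the analytic estimate, and one must verify that the pulled-back Euclidean metric on the target is itself complete before the covering theorem can be applied.
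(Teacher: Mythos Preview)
Your proposal is correct and follows essentially the same route as the paper: invoke Proposition~\ref{prop:diff_ineq_ma} to obtain $\Delta_\phi(\sigma/2n)\geq(\sigma/2n)^2$, then apply the Liouville-type Theorem~\ref{th:lioville} (whose hypothesis $\Ric_\phi\geq 0$ you correctly supply via Proposition~\ref{prop:ric_positive}) to force $u_{,ijk}\equiv 0$, and finally upgrade the development map to a global diffeomorphism. The only cosmetic difference is in that last step: the paper lifts straight segments by hand to prove surjectivity and then appeals to simple connectedness of $\R^n$, whereas you invoke the standard theorem that a local isometry out of a complete manifold is a Riemannian covering; both arguments are equivalent, and your worry about completeness of the target is unnecessary since a constant Euclidean metric on $\R^n$ is manifestly complete (and in any case the covering theorem only needs completeness of the source).
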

    \begin{proof}
        By Proposition~\ref{prop:diff_ineq_ma}, function $\sigma=|u_{,ijk}|^2$ satisfies the differential inequality $\Delta_\phi\sigma\geq\frac{1}{2n}\sigma^2$, or, equivalently,
        \[
        \Delta_\phi(\sigma/2n)\geq (\sigma/2n)^2.
        \]
        Since $(N,g)$ is complete, by the Liouville-type Theorem~\ref{th:lioville} we conclude that $\sigma=|u_{,ijk}|^2$ is identically zero, so that $u(x)$ is a quadratic polynomial in $\{x^i\}$, and $g=D^2u(x)$ is a pullback of a flat metric $g_0$ on $\R^n$.

        We claim that $\bm\iota(N)=\R^n$. Indeed, let $p_0\in \bm\iota(N)\subset \R^n$ and pick any other point $p\in \R^n$. Connect $p_0$ and $p$ with a straight segment $[p_0,p]$ in $\R^n$. Since $[p_0,p]$ has a finite $g_0$-length, $N$ is complete and $\bm\iota$ is a local isometry we conclude that $[p_0,p]$ must be entirely contained in $\bm\iota(N)$.
        Finally, since $N$ is connected, $\R^n$ is simply-connected, and $\bm\iota\colon N\to\R^n$ is a local diffeomorphism, $\bm\iota$ must be a bijection.
    \end{proof}

    In a special case when $N\subset \R^n$ is an open subset we have the following corollary generalizing the result of Calabi~\cite{ca-58} about solutions to $\det u_{,ij}=1$:

    \begin{corollary}\label{cor:calabi_gen}
        Let $N\subset \R^n$ be an open subset and $u\colon N\to  \R$ a strictly convex function solving the Monge-Amp\`ere equation
        \begin{equation}
            \log\det (u_{,ij})=-v_jx^j+u_{,i}\xi^i+c,
        \end{equation}
        where $v\in (\R^n)^*$, $\xi\in\R^n$ and $c$ is a constant. If metric $g=D^2u$ on $N$ is complete, then $N=\R^n$, and $u(x)$ is a quadratic polynomial.
    \end{corollary}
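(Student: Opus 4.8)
The plan is to recognize this as essentially the content of Theorem~\ref{th:ma_rigid}, specialized to the case where the affine manifold is already an open subset of $\R^n$. Since $N\subset\R^n$ is open, it carries the flat torsion-free connection $D$ obtained by restricting the standard affine structure $D_0$ of $\R^n$, and the inclusion $\bm{\iota}\colon N\hookrightarrow\R^n$ is a development map: it is an open immersion with $D=\bm{\iota}^*D_0$, so the standard coordinates $\{x^i\}$ restrict to the $D$-parallel coordinates appearing in the statement. With this identification, the hypothesis of the corollary is exactly that $g=D^2u$ is a complete Hessian metric whose potential solves~\eqref{eq:main}.

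The one gap between the corollary and Theorem~\ref{th:ma_rigid} is that an open subset $N\subset\R^n$ need not be simply connected, whereas the theorem assumes simple connectivity. To bridge this I would pass to the universal cover $p\colon\til N\to N$ and pull everything back: set $\til D=p^*D$, $\til u=u\circ p$, and $\til g=p^*g=D^2\til u$. The composition $\til{\bm{\iota}}=\bm{\iota}\circ p\colon\til N\to\R^n$ is then a development map for $(\til N,\til D)$, the potential $\til u$ solves the same equation~\eqref{eq:main} with the same data $v,\xi,c$ (the equation is invariant under the local affine isomorphism $p$), and, since $p$ is a Riemannian covering for $\til g=p^*g$, completeness of $(N,g)$ lifts to completeness of $(\til N,\til g)$. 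Theorem~\ref{th:ma_rigid} now applies on $\til N$: the map $\til{\bm{\iota}}$ is a diffeomorphism onto $\R^n$ and $\til u$ is a quadratic polynomial.

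It then remains to descend these conclusions to $N$. Because $\til{\bm{\iota}}=\bm{\iota}\circ p$ is a bijection and $\bm{\iota}$ is injective, $p$ must be injective; a covering map of a connected base with single-point fibres is a homeomorphism, so $N$ is in fact simply connected, $p$ is a diffeomorphism, and $\bm{\iota}=\til{\bm{\iota}}\circ p^{-1}$ is a diffeomorphism of $N$ onto $\R^n$. Hence $N=\R^n$, and since $p$ is an affine isomorphism and $\til u=u\circ p$ is quadratic, $u$ is a quadratic polynomial as well.

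I expect the main subtlety to be precisely this simple-connectivity bookkeeping: checking that the Monge-Amp\`ere equation, the development map, and---most importantly---geodesic completeness all transfer correctly to the universal cover, and then arguing that the resulting bijection forces the cover to be trivial. An alternative, self-contained route avoids the cover entirely: on $N\subset\R^n$ the weight $\phi=\frac{1}{2}(u_{,p}\xi^p+v_qx^q)$ is already globally defined, so Proposition~\ref{prop:diff_ineq_ma} yields $\Delta_\phi\sigma\geq\frac{1}{2n}\sigma^2$ directly, the Liouville-type Theorem~\ref{th:lioville} forces $\sigma=|u_{,ijk}|^2\equiv 0$ so that $u$ is quadratic and $g$ is flat with straight-line geodesics, and Hopf--Rinow together with completeness shows that every straight ray from a point of $N$ remains in $N$, giving $N=\R^n$.
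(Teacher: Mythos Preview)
Your proposal is correct, and in fact more careful than the paper, which states the corollary with no proof and implicitly treats it as an immediate instance of Theorem~\ref{th:ma_rigid}. You rightly flag that an open $N\subset\R^n$ need not be simply connected, and both of your remedies are valid: passing to the universal cover and descending via the injectivity of $\bm\iota\circ p$ works cleanly, and your alternative route---observing that once $u$ is globally given on $N$ the weight $\phi$, Proposition~\ref{prop:ric_positive}, Proposition~\ref{prop:diff_ineq_ma}, and Theorem~\ref{th:lioville} all apply directly without any simple-connectivity hypothesis---is exactly the proof of Theorem~\ref{th:ma_rigid} rerun on $N$ itself. The latter is presumably what the paper has in mind, since simple connectivity in Section~\ref{s:ma} is only ever used to globalize $u$, which here is handed to you.
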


    \section{Uniqueness of toric K\"ahler-Ricci solitons}\label{s:main}

    We are now ready to prove our main result. Recall, that a gradient steady toric K\"ahler-Ricci soliton is K\"ahler manifold $(M,g,J )$ equipped with an effective Hamiltonian action of a half-dimensional torus $\T^n$ solving the steady K\"ahler-Ricci soliton equation
    \begin{equation}\label{eq:s5_krs}
    \rho_\omega+\i\pd\bar\pd f=0.
    \end{equation}
    By averaging the soliton potential we can assume that $f$ is $\T^n$-invariant so that the Killing holomorphic vector field $J\n_g f$ is the infinitesimal vector field $X_v$ for some $v\in\mf t$. On the locus $M_0\subset M$ where the action $\T^n\colon M\to M$ is free, equation~\eqref{eq:s5_krs} is locally equivalent to the Monge-Amp\`ere for a function $u(x)$ on $U\subset \mf t^*$:
    \[
    \log\det u_{,ij}=-v_px^p+u_{,q}\xi^q+c.
    \]

    We further assume that $(M,g)$ is complete. In general, when the action $\T^n\times M\to M$ has nontrivial stabilizers, the orbit space $N:=M/\T^n$ is not a manifold,
    and the analytical tools developed in Section~\ref{s:ma} only apply to the open dense subset $N_0=M_0/\T^n$. In the presence of non-trivial stabilizers, $N_0$ will always be incomplete, so the rigidity Theorem~\ref{th:ma_rigid} does not hold. However, in the particular setting when the action $\T^n\times M\to M$ is \textit{free}, we have $N_0=N$ is complete which allows us to deduce the following result.

    \begin{theorem}\label{th:krs_uniqueness}
       Let $(M,g,J ,\T^n)$ be a complete steady toric K\"ahler-Ricci soliton with the free torus action $\T^n\times M\to M$. Then $(M,g,J ,\T^n)$ is equivariantly isomorphic to a flat algebraic torus $(\C^*)^n$ equipped with the standard complex structure and the action of $U(1)^n\subset (\C^*)^n$.
    \end{theorem}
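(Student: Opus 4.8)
The plan is to reduce the theorem to the rigidity statement for the weighted Monge-Amp\`ere equation, Theorem~\ref{th:ma_rigid}, applied to the orbit space, and then to reconstruct $M$ from the resulting flat quotient data. First, since the action $\T^n\times M\to M$ is free, we have $M_0=M$, and the orbit space $N:=M/\T^n=N_0$ is a genuine smooth manifold carrying the Hessian structure $\bm\pi_*g$ of Corollary~\ref{cor:hessian}, with $\bm\pi\colon M\to N$ a principal $\T^n$-bundle and a Riemannian submersion. By Corollary~\ref{cor:MA_eq_simply_connected}, the symplectic potential $u$ is globally defined on the universal cover $\til N$ and solves the Monge-Amp\`ere equation~\eqref{eq:MA_global} there. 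The heart of the matter is thus to verify the hypotheses of Theorem~\ref{th:ma_rigid} on $\til N$ and then to transfer its conclusion back to $M$.

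The key analytic input is completeness. First I would show that $(N,\bm\pi_*g)$ is complete: any finite-length curve in $N$ admits a horizontal lift to $M$ of the same length, so if $N$ were incomplete the lift of a curve escaping every compact set would contradict completeness of $(M,g)$. Since $\til N\to N$ is a Riemannian covering, the pulled-back metric $D^2u$ on $\til N$ is complete as well. Now $\til N$ is a simply-connected affine manifold equipped with a development map (the lift of $\til{\bm\mu}$, by Proposition~\ref{prop:affine}) and a complete Hessian metric solving~\eqref{eq:MA_global}, so Theorem~\ref{th:ma_rigid} applies: the development map $\bm\iota\colon\til N\to\mf t^*\cong\R^n$ is a diffeomorphism, $u$ is a quadratic polynomial, and $g=D^2u$ is flat with constant positive-definite Hessian $\bm G$.

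It remains to descend from $\til N$ to $N$ and to rebuild $M$. The crucial observation is that the development map fed to Theorem~\ref{th:ma_rigid} is precisely the composition $\bm\iota=\til{\bm\mu}\circ p$, where $p\colon\til N\to N$ is the covering projection; since $\bm\iota$ is a bijection, $p$ must be injective, hence a one-sheeted cover, so $N$ is simply connected and $N=\til N\cong\R^n$. Consequently $\bm\pi\colon M\to\R^n$ is a principal $\T^n$-bundle over a contractible base and is therefore trivial, $M\cong\R^n\times\T^n$, and the global angular coordinates $\{\theta_i\}$ exist. With $\bm G$ constant, the K\"ahler data of Lemma~\ref{lm:kahler_coordinates} becomes the standard flat structure: the holomorphic coordinates $w_i=u_{,i}+\i\theta_i$ satisfy $dw_i=\bm G_{ij}dx^j+\i\,d\theta_i$ with $\bm G$ constant, so $z_i=e^{w_i}$ realizes $(M,g,J)$ as the flat $(\C^*)^n$, with $\T^n=U(1)^n$ acting by rotation in the $\theta_i$. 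This is the asserted equivariant isomorphism.

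The main obstacle is the global reconstruction rather than the local PDE analysis, which is already packaged into Theorem~\ref{th:ma_rigid}. The two delicate points are (i) transferring completeness from the total space $(M,g)$ down to $N$ and then up to the cover $\til N$, via horizontal lifts, and (ii) promoting the cover-level rigidity to a statement about $N$ and $M$ themselves, in particular proving that $N$ is simply connected. The factorization $\bm\iota=\til{\bm\mu}\circ p$ makes (ii) clean, but one must be careful to feed Theorem~\ref{th:ma_rigid} the specific development map $\til{\bm\mu}\circ p$ (development maps being unique only up to an affine automorphism of $\R^n$), so that the injectivity of $\bm\iota$ genuinely forces $p$ to be a diffeomorphism.
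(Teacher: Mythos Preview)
Your proposal is correct and follows essentially the same approach as the paper's own proof: pass to the orbit space $N$ and its universal cover $\til N$, use completeness (via horizontal lifts) and Corollary~\ref{cor:MA_eq_simply_connected} to put yourself in the setting of Theorem~\ref{th:ma_rigid}, then use that the development map on $\til N$ factors through $N$ to conclude $\til N=N\cong\R^n$ and rebuild $M\cong(\C^*)^n$ from the constant Hessian data. Your treatment is in fact slightly more explicit than the paper's in two places---the completeness transfer via horizontal lifts, and the principal $\T^n$-bundle triviality over the contractible base---but these are elaborations of the same argument rather than a different route.
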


    \begin{proof}
        Let $\bm\pi\colon M\to N$ be the projection onto the orbit space. The orbit space is a smooth manifold, since the action of $\T^n$ is free. Next $N$ inherits an affine structure $(N,D)$ from the moment map $\til{\bm\mu}\colon N\to \mf t^*$ which is the development map for $D$. By Corollary~\ref{cor:hessian} $N$ also has a natural Hessian structure $(N,D,\bm\pi_*g)$. Furthermore since $(M,g)$ is complete, the orbit space $(N,\bm\pi_*g)$ is complete as well. Let $\til N\to N$ be the universal cover of $N$, and by abuse of notation denote by
        \[
        \til{\bm\mu}\colon \til N\to \mf t^*
        \]
        the lift of the moment map to $\til N$. Corollary~\ref{cor:MA_eq_simply_connected} guarantees that there exists a function $u\colon \til N\to \R$ and $v\in\mf t$, $\xi\in\mf t^*$, $c\in \R$ such that
        \[
        \log\det u_{,ij}=-v_px^p+u_{,q}\xi^q+c.
        \]
        Now we can apply Theorem~\ref{th:ma_rigid} to $(\til N,D,\bm\pi_*g)$ and conclude that $\til{\bm\mu}\colon \til N\to\mf t^*$ is a diffeomorphism, and $u(x)$ is a quadratic polynomial. As $\til{\bm\mu}\colon \til N\to \mf t^*$ is constant on the orbits of the deck transformation of the covering $\til N\to N$, we conclude that $\til N=N$. Therefore the functions $\{x^i\}$ as well as $\{y_i=u_{,i}\}$ provide a globally defined coordinate system on $N$. Since $u$ is a positive definite quadratic polynomial, the range of $(y_1,\dots,y_n)$ is $\R^n$. Now, as $N$ is simply-connected, we have the globally defined angular coordinates $\theta_i\in U(1)$ on $M$ such that $J^*d\theta_i=dy_i$. Thus $M=\mf t^*\times \T^n\simeq (\C^*)^n$ with the global holomorphic coordinates $\{y_i+\i\theta_i\}$. Finally, metric $g$ on $M$ expressed in the coordinates $\{x^i\}$, $\{\theta_i\}$
        \[
        g=\sum_{i,j=1}^n \bm G_{ij}dx^idx^j+\sum_{i,j=1}^n \bm (\bm G^{-1})^{ij}d\theta_id\theta_j,\quad \bm G_{ij}=u_{,ij}\]
        has constant $\bm G$ and thus is flat.
    \end{proof}

	Theorem~\ref{th:krs_uniqueness} is essentially a non-existence result implying that there are no non-trivial complete toric KRS on $(\C^*)^n$. It would be interesting to relax the freeness assumption, and obtain consequences about the geometry of general toric KRS. Clearly, we cannot expect to have a similar uniqueness/classification result, since already $\C^n=\prod \C^{n_i}$ admits many non-equivalent toric KRS given by the products of Cao's solitons $\C^{n_i}$. A technical condition particularly important in the theory of non-compact toric K\"ahler manifolds (see~\cite{pr-wu-94}) is the \textit{properness} of the moment map
	\[
	\bm{\mu}\colon M\to \mf t^*,
	\]
	i.e., $\bm\mu(K^{-1})$ is compact for a compact $K\subset \mf t^*$. Crucially, it allows to set up the Duistermaat-Heckman theory on $M$, which in turn is instrumental in defining the weighted analogous of the Aubin and other energy functionals, see, e.g.,~\cite{ci-20,co-de-su-19}.
	
	We conjecture that the moment map on a complete toric KRS is necessarily proper, and expect that the results of Sections~\ref{s:bakry-emery} and~\ref{s:ma} about the Monge-Amp\`ere equations will be useful in proving it.
	\begin{conjecture}
		Let $(M,g,J ,\T^n)$ be a complete toric K\"ahler-Ricci soliton. Then the moment map $\bm\mu\colon M\to \mf t^*$ is proper.
	\end{conjecture}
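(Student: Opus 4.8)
The plan is to pass to the orbit space and recast properness as the absence of ends with bounded moment, and then to use the Monge--Amp\`ere structure of Section~\ref{s:ma} to rule such ends out. Since $\bm\mu$ is $\T^n$-invariant it descends to $\til{\bm\mu}\colon N\to\mf t^*$ on $N=M/\T^n$, and as the orbits are compact, $\bm\mu^{-1}(K)$ is compact exactly when $\til{\bm\mu}^{-1}(K)$ is. Because $(M,g)$ is complete and $M\to N$ restricts to a Riemannian submersion on the free locus, $N$ is a complete length space; thus properness fails if and only if there is a sequence $q_k\in N$ with $d_N(q_0,q_k)\to\infty$ and $\til{\bm\mu}(q_k)\to\xi_\infty$ bounded. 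First I would fix such a hypothetical escaping end and work towards a contradiction on the open dense free locus $N_0$, where by Corollary~\ref{cor:hessian} the metric is the Hessian $g=D^2u$ of a symplectic potential and $\til{\bm\mu}$ is the affine development map.

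The key point is that an escaping end, if it exists, must be flat, and I would extract this from the a priori machinery of Section~\ref{s:ma}. By Proposition~\ref{prop:ric_positive} the weighted Ricci tensor satisfies $\Ric_\phi\ge0$ for $\phi=\tfrac12(u_{,p}\xi^p+v_qx^q)$, and by Proposition~\ref{prop:diff_ineq_ma} the third-order density $\sigma=|u_{,ijk}|^2$ obeys $\Delta_\phi\sigma\ge\tfrac{1}{2n}\sigma^2$. Along a geodesic ray running out to infinity on the end I would insert balls $B_R$ into the free locus and apply the Liouville estimate of Theorem~\ref{th:lioville}; letting $R\to\infty$ forces $\sigma\equiv0$ on the end, exactly as in the proof of the rigidity Theorem~\ref{th:ma_rigid}. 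Hence $u$ is a quadratic polynomial along the end and the induced metric is flat.

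Granting flatness, the contradiction is immediate and clean. On a flat Hessian end the geodesics of $g$ are straight lines in the development coordinates, so the affine development $\til{\bm\mu}$ carries a unit-speed geodesic ray of infinite length to a straight ray of infinite Euclidean length in $\mf t^*$. Such a ray cannot be contained in a bounded set, contradicting $\til{\bm\mu}(q_k)\to\xi_\infty$. Therefore there is no escaping end: every approach to the edge of the moment image at a finite location in $\mf t^*$ occurs at finite $g$-distance, its limit lies in $M$ by completeness and is a compact lower-dimensional orbit, and $\bm\mu^{-1}(K)$ is compact. This would prove properness.

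The hard part is establishing $\sigma\equiv0$ on the end, and a related global obstruction. Applying Theorem~\ref{th:lioville} with $R\to\infty$ requires the balls $B_R$ to remain in the free locus with uniformly controlled comparison constant $m_\delta$; but $\phi$ is unbounded, so (as noted after Proposition~\ref{prop:diff_ineq_ma}) $\Ric_\phi$ and $\Rm$ are not mutually controlled and the weighted mean-curvature bound of Theorem~\ref{th:mean_curvature_I} need not be uniform along the escaping sequence. This is delicate precisely when the end accumulates over an interior regular value $\xi_\infty$, where the moment coordinate $x$ is confined and the free balls cannot be enlarged, so one may extract only $\sigma\le C m_\delta$ rather than $\sigma\to0$. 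Dovetailing with this is the question of fibre-connectedness: ruling out infinitely many $\T^n$-orbits over a single regular value (equivalently, injectivity of the development $\til{\bm\mu}$ on $N_0$) is exactly the non-compact convexity phenomenon and seems to demand a separate argument, presumably combining the triviality of the affine holonomy of $N_0$ furnished by the global development map with the completeness of $g$ on $M$. Resolving these two issues is, I expect, the crux of the conjecture.
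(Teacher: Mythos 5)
This statement is left as an open conjecture in the paper; there is no proof there to compare against, and your proposal does not supply one either --- as you yourself concede in the final paragraph, the two steps you defer are precisely the content of the conjecture. The central gap is the claim that $\sigma=|u_{,ijk}|^2$ vanishes on a hypothetical escaping end. Theorem~\ref{th:lioville} yields $\sigma(p_0)\leq C(\delta,m_\delta)R^{-1}$ only when the geodesic ball $B_R(p_0)$ lies inside the region carrying the Hessian structure, i.e.\ inside $N_0=M_0/\T^n$, and the constant depends on $m_\delta=\sup_{\pd B_\delta(p_0)}m_\phi$. When the action has non-trivial stabilizers, $N_0$ is incomplete (as noted just before Theorem~\ref{th:krs_uniqueness}), so at a fixed center no sequence $R\to\infty$ of admissible balls exists; if instead you slide the center $p_0$ out along the end, then $m_\delta$ becomes center-dependent, and since $\phi=\frac12(u_{,p}\xi^p+v_qx^q)$ is unbounded, Theorem~\ref{th:mean_curvature_I} gives only monotonicity of $m_\phi$ along radial geodesics from each new center, not a uniform bound. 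You therefore obtain at best a bound on $\sigma$ in terms of uncontrolled quantities, not $\sigma\equiv 0$, and the subsequent ``flat end'' contradiction never gets off the ground. Note also that an end accumulating over an interior value $\xi_\infty\in\mf t^*$ is exactly the regime where the admissible radii in the moment directions are confined, so the obstruction is not a technicality one can wave away.

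The secondary step is also unproven: even granting flatness of the end, to conclude that $\til{\bm\mu}$ is unbounded along an infinite geodesic ray you need the development map restricted to the end to be injective (equivalently, connectedness/finiteness of the fibres of $\til{\bm\mu}$ over regular values), and this non-compact convexity statement is itself typically \emph{derived from} properness of the moment map rather than available as an input (cf.\ the role of properness in~\cite{pr-wu-94} as cited in the paper). In short: your outline is a sensible program and it correctly localizes the difficulty, but both load-bearing steps remain open, so the conjecture is not proved.
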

	Properness of the soliton potential $f=\la \bm \mu,v\ra+c$ implies the properness of $\bm{\mu}$, and the former is known to hold on the shrinking Ricci solitons, and under mild geometric assumptions on the expanding solitons, see~\cite{co-de-su-19}. Unfortunately, on a \textit{steady} KRS the soliton potential $f$ is not necessarily proper, providing no information about the moment map.

		%
	\emergencystretch=1em
	\printbibliography
\end{document}